\newcommand{\card}[1]{\ensuremath{\lvert{#1}\rvert}}     % cardinality
\newcommand{\cl}[1]{\ensuremath{\mathcal{#1}}}           % function class
\newcommand{\nset}[1]{\ensuremath{[{#1}]}}               % n-element set
\newcommand{\couples}[1][n]{\ensuremath{\binom{#1}{2}}}  % the set of 2-element subsets of an n-element set
\newcommand{\ms}[1]{\langle #1 \rangle}     % multiset
\DeclareMathOperator{\deck}{deck}                        % deck
\theoremstyle{plain}
\newtheorem{theorem}{Theorem}[section]
\newtheorem{lemma}[theorem]{Lemma}
\theoremstyle{definition}
\newtheorem{example}[theorem]{Example}
\theoremstyle{remark}
\newtheorem{remark}[theorem]{Remark}
\begin{document}
\noindent
{\footnotesize
Preprint of an article published in
\textit{International Journal of Algebra and Computation}
(2014),
DOI: 10.1142/S0218196714500027.
\copyright{} copyright World Scientific Publishing Company.
\newline
\url{http://www.worldscientific.com/worldscinet/ijac}
}

\bigskip
\bigskip

\title[Reconstructing multisets over commutative groupoids]{Reconstructing multisets over commutative groupoids and affine functions over nonassociative semirings}

\author{Erkko Lehtonen}

\address[E. Lehtonen]{Computer Science and Communications Research Unit \\
University of Luxembourg \\
6, rue Richard Coudenhove-Kalergi, L--1359 Luxembourg, Luxembourg}
\email{erkko.lehtonen@uni.lu}

\begin{abstract}
A reconstruction problem is formulated for multisets over commutative groupoids. The cards of a multiset are obtained by replacing a pair of its elements by their sum. Necessary and sufficient conditions for the reconstructibility of multisets are determined. These results find an application in a different kind of reconstruction problem for functions of several arguments and identification minors: classes of linear or affine functions over nonassociative semirings are shown to be weakly reconstructible. Moreover, affine functions of sufficiently large arity over finite fields are reconstructible.
\end{abstract}

\maketitle

%%%%%%%%%%%%%%%%%%%%%%%%%%%%%%%%%%%%%%%%%%%%%%%%%%

\section{Introduction}

Generally speaking, a reconstruction problem asks whether a mathematical object can be recovered from partial information. The kind of reconstruction problems we discuss in this paper fall into the following general scheme: given a combinatorial object, we apply a certain operation to its elements in all possible ways, and we ask whether the initially given object is uniquely determined (up to some kind of isomorphism) by the collection of these derived objects (which are called the cards of the object). Perhaps one of the most famous reconstruction problems is the following: Is every graph uniquely determined, up to isomorphism, by the collection of its subgraphs obtained by deleting a single vertex? It was conjectured by Kelly~\cite{Kelly1942} (see also Ulam's problem book~\cite{Ulam}) that the answer is positive, provided that the graph has at least three vertices. While the conjecture has been shown to hold for various classes of graphs, in full generality it remains an important open problem in graph theory.

In this paper we consider two different reconstruction problems. The first one deals with multisets over a commutative groupoid. The cards of a multiset $M$ are the multisets obtained from $M$ by replacing a pair of its elements by their sum. We find necessary and sufficient conditions for a multiset to be reconstructible in this setting. Reconstructibility depends on the cardinality and the form of a multiset and also on the underlying groupoid.

The reconstruction problem for multisets arose from a completely different reconstruction problem that was posed and studied in~\cite{LehtonenDeckSymm}. Here, the objects are functions of several arguments, and the cards of a function $f \colon A^n \to B$ are the $\couples$ identification minors of $f$, i.e., the $(n-1)$-ary functions obtained from $f$ by identifying a pair of its arguments. The special case of affine functions over semirings reduces to the reconstruction problem for multisets over commutative groupoids. As an application of our solution to the reconstruction problem for multisets, we show that classes of affine or linear functions over nonassociative semirings are weakly reconstructible. Moreover, affine functions of sufficiently large arity over finite fields are reconstructible.

A function $f \colon A^n \to B$ is called a minor of another function $g \colon A^m \to B$ if there exists a map $\sigma \colon \{1, \dots, m\} \to \{1, \dots, n\}$ such that $f(a_1, \dots, a_n) = g(a_{\sigma(1), \dots, \sigma(m)})$ for all $a_1, \dots, a_n \in A$.
New discoveries -- either positive or negative -- about the reconstruction problem for functions and identification minors will shed some light on the minor relation, a topic that has attracted the attention of several researchers over the past years (see, e.g., \cite{BCP,CouFol2005,CouLeh2009,EFHH,LehSze,Pippenger,Wang,Willard,Zverovich}).

\section{Preliminaries}
\label{sec:preliminaries}

We follow the standard terminology and notation of abstract algebra, as found, e.g., in~\cite{Lang,MacBir}.

For a positive integer $n$, the set $\{1, \dots, n\}$ is denoted by $\nset{n}$. The set of all $2$-element subsets of $\nset{n}$ is denoted by $\couples$. The set $\{0, 1, 2, \dots\}$ of nonnegative integers is denoted by $\mathbb{N}$.

A \emph{finite multiset} $M$ over a set $X$ is a map $\mathbf{1}_M \colon X \to \mathbb{N}$, called a \emph{multiplicity function,} such that the set $\{x \in X : \mathbf{1}_M(x) \neq 0\}$ is finite. We will only discuss finite multisets, and we will refer to them simply as \emph{multisets.} For a finite multiset $M$, the sum $\sum_{x \in X} \mathbf{1}_M(x)$ is a well-defined natural number, and it is called the \emph{cardinality} of $M$ and denoted by $\card{M}$. We refer to a multiset of cardinality $n$ as an \emph{$n$-multiset.} For each $x \in X$, the number $\mathbf{1}_M(x)$ is called the \emph{multiplicity} of $x$ in $M$.
We write $x \in M$ if $\mathbf{1}_M(x) \geq 1$.
A multiset $M$ is a \emph{submultiset} of $M'$, denoted $M \subseteq M'$, if $\mathbf{1}_M(x) \leq \mathbf{1}_{M'}(x)$ for all $x \in X$.
The \emph{empty multiset} $\emptyset$ is given by the multiplicity function $\mathbf{1}_\emptyset(x) = 0$ for all $x \in X$.

We may represent a finite multiset $M$ as a list enclosed in angle brackets where each element $x \in X$ occurs $\mathbf{1}_M(x)$ times, e.g., $\ms{0, 0, 0, 1, 1, 2}$. Also, if $(a_i)_{i \in I}$ is a finite indexed family of elements of $X$, then we will write $\ms{a_i : i \in I}$ to denote the multiset in which the multiplicity of each $x \in X$ equals $\card{\{i \in I : a_i = x\}}$.

Let $M$ and $M'$ be finite multisets over $X$. The \emph{multiset sum} $M \uplus M'$, the \emph{difference} $M \setminus M'$, and the \emph{intersection} $M \cap M'$ of $M$ and $M'$ are defined by the multiplicity functions
\begin{align*}
\mathbf{1}_{M \uplus M'}(x) &= \mathbf{1}_M(x) + \mathbf{1}_{M'}(x), \\
\mathbf{1}_{M \setminus M'}(x) &= \max(\mathbf{1}_M(x) - \mathbf{1}_{M'}(x), 0),\\
\mathbf{1}_{M \cap M'}(x) &= \min(\mathbf{1}_M(x), \mathbf{1}_{M'}(x)).
\end{align*}

%%%%%%%%%%%%%%%%%%%%%%%%%%%%%%%%%%%%%%%%%%%%%%%%%%

\section{Reconstruction problem for multisets over commutative groupoids}

In this section, we formulate a reconstruction problem for multisets over commutative groupoids, and we completely characterize the reconstructible multisets. We will conclude the section with some open problems.

\subsection{Reconstruction problem for multisets}

Let $(G; +)$ be a commutative groupoid. Let $n$ be an integer at least $2$. Let $M$ be a multiset of cardinality $n$ over $G$. Fix an $n$-tuple $(m_1, \dots, m_n) \in G^n$ satisfying $M = \ms{m_1, \dots, m_n}$. For each $I \in \couples$, let $M_I := M \setminus \ms{m_{\min I}, m_{\max I}} \uplus \ms{m_{\min I} + m_{\max I}}$.
The \emph{cards} of $M$ are the multisets $M_I$, for each $I \in \couples$, and the \emph{deck} of $M$ is the multiset $\deck M := \ms{M_I : I \in \couples}$.
(It is irrelevant which particular tuple $(m_1, \dots, m_n)$ is chosen for a given $M$. Any valid choice will give rise to the same deck.)

A multiset $M$ is \emph{reconstructible} if for all multisets $M'$ over $G$ the condition $\deck M = \deck M'$ implies $M = M'$.

We have now all the necessary definitions for our discussion, and we can formulate a reconstruction problem for multisets over commutative groupoids. Is every multiset over a commutative groupoid reconstructible? If not, which multisets are reconstructible and which ones are not? Does the answer depend on the underlying groupoid?

Examples~\ref{ex:4rstab}--\ref{ex:2abcd} below illustrate that the answer to our first question is negative: not every multiset over a commutative groupoid is reconstructible. The latter two questions will be answered later in this paper. It will turn out that these examples are exhaustive; there is no other nonreconstructible multiset over any commutative groupoid than the ones described in Examples~\ref{ex:4rstab}--\ref{ex:2abcd}.

\begin{example}
\label{ex:4rstab}
Let $(G; +)$ be a commutative groupoid with elements $r$, $s$, $t$, $u$, $v$ satisfying $x + u = v$ and $x + v = u$ for all $x \in \{r, s, t\}$ and $r + s = s$, $s + t = t$, $t + r = r$.
Let $M = \ms{r, s, t, u}$, $M' = \ms{r, s, t, v}$. If $u \neq v$, then $M \neq M'$ but
\[
\deck M = \deck M' =
\ms{ \ms{r, s, u}, \ms{r, t, u}, \ms{s, t, u}, \ms{r, s, v}, \ms{r, t, v}, \ms{s, t, v} }.
\]

Note that either $r$, $s$ and $t$ are pairwise distinct or $r = s = t$. For, assume that $r = s$. Then $t = s + t = r + t = r$, whence $r = s = t$. A similar argument shows that $r = t$ or $s = t$ implies $r = s = t$.

If $r$, $s$ and $t$ are pairwise distinct, then $r + (s + t) = r \neq t = (r + s) + t$, i.e., $(G; +)$ is not associative. If $r = s = t$ and $u \neq v$, then $r + (r + u) = u \neq v = (r + r) + u$, i.e., $(G; +)$ is not associative and not even alternative. (A binary operation is \emph{left alternative} if it satisfies the identity $x(xy) = (xx)y$ and \emph{right alternative} if it satisfies the identity $y(xx) = (yx)x$. An operation is \emph{alternative} if it is both left and right alternative. Alternativity is a weaker form of associativity.)
\end{example}

\begin{example}
\label{ex:3abaceqbc}
Let $(G; +)$ be a commutative groupoid with elements $r$, $s$, $t$ satisfying $r + (r + s) = s$, $r + (r + t) = t$ and $(r + s) + (r + t) = s + t$.
Let $M = \ms{r, s, t}$, $M' = \ms{r, r + s, r + t}$. Then
\[
\deck M = \deck M' =
\ms{ \ms{r, s + t}, \ms{s, r + t}, \ms{t, r + s} }.
\]
Furthermore, if $\{r + s, r + t\} \neq \{s, t\}$, then $M \neq M'$.

Note that if $(G; +)$ is a Boolean group (a group in which every nonneutral element has order $2$), then the above conditions are satisfied by all elements $r, s, t \in G$. Furthermore, if $r$ is not neutral and $r + s \neq t$, then $M \neq M'$.
\end{example}

\begin{example}
\label{ex:3abaceqa}
Let $(G; +)$ be a commutative groupoid with elements $r$, $s$, $t$ satisfying $(r + s) + (r + t) = r$, $(r + s) + (s + t) = s$ and $(r + t) + (s + t) = t$.
Let $M = \ms{r, s, t}$, $M' = \ms{r + s, r + t, s + t}$. Then
\[
\deck M = \deck M' =
\ms{ \ms{r, s + t}, \ms{s, r + t}, \ms{t, r + s} },
\]
but $M$ and $M'$ are not necessarily equal.

Note that if $(G; +)$ is a monoid with neutral element $0$, then the above conditions are satisfied by all elements $r, s, t \in G$ such that $r + s + t = 0$.
\end{example}

\begin{example}
\label{ex:2abcd}
Let $(G; +)$ be a commutative groupoid with elements $r$, $s$, $t$, $u$ satisfying $r + s = t + u$ and $\{r, s\} \neq \{t, u\}$. Let $M = \ms{r, s}$, $M' = \ms{t, u}$. Then $M \neq M'$ but
$\deck M = \deck M' = \ms{\ms{r + s}}$.
\end{example}

\subsection{The solution to the reconstruction problem for multisets}

We are going to show that a multiset over a commutative groupoid is reconstructible if and only if it is not any one of the multisets described in Examples~\ref{ex:4rstab}--\ref{ex:2abcd}. In our approach to characterizing the reconstructible multisets, we will make good use of the collection of all elements in all cards of a multiset, as it reveals plenty of information about the multiset itself.
Let $M$ be a multiset of cardinality $n$ over a commutative groupoid $(G; +)$.
Denote $\widetilde{M} := \biguplus_{I \in \couples} M_I$, and denote $N_M(x) := \mathbf{1}_{\widetilde{M}}(x)$ for each $x \in G$.

\begin{lemma}
\label{lem:deltaM}
Let $M$ be a multiset of cardinality $n$ over $G$. Then $N_M(x) = \mathbf{1}_M(x) \cdot \binom{n-1}{2} + \delta_M(x)$ for some $\delta_M \colon G \to \mathbb{N}$ satisfying $\sum_{x \in G} \delta_M(x) = \binom{n}{2}$.
\end{lemma}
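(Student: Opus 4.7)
The plan is a direct bookkeeping argument: unpack the definition of each $M_I$, sum the multiplicities over all $I\in\couples$, and regroup.

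Fix a tuple $(m_1,\dots,m_n)$ with $M=\ms{m_1,\dots,m_n}$. By definition of $M_I$, for every $x\in G$ and every $I=\{i,j\}$ with $i<j$,
\[
\mathbf{1}_{M_I}(x) = \mathbf{1}_M(x) - \mathbf{1}_{\ms{m_i,m_j}}(x) + \mathbf{1}_{\ms{m_i+m_j}}(x).
\]
Summing over all $I\in\couples$ gives
\[
N_M(x) = \binom{n}{2}\mathbf{1}_M(x) - \sum_{i<j}\mathbf{1}_{\ms{m_i,m_j}}(x) + \sum_{i<j}\mathbf{1}_{\ms{m_i+m_j}}(x).
\]

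The middle sum is handled by a standard double-counting: $\mathbf{1}_{\ms{m_i,m_j}}(x) = [m_i=x]+[m_j=x]$, and each index $k\in\nset{n}$ lies in exactly $n-1$ pairs $\{i,j\}$, so the sum equals $(n-1)\mathbf{1}_M(x)$. Defining
\[
\delta_M(x) := \sum_{i<j}\mathbf{1}_{\ms{m_i+m_j}}(x) = \card{\{I\in\couples : m_{\min I}+m_{\max I}=x\}},
\]
and using the identity $\binom{n}{2}-(n-1)=\binom{n-1}{2}$, we obtain the claimed formula. The function $\delta_M$ takes values in $\mathbb{N}$ because it is defined as a count, and
\[
\sum_{x\in G}\delta_M(x) = \sum_{x\in G}\card{\{I\in\couples : m_{\min I}+m_{\max I}=x\}} = \couples,
\]
since every $I\in\couples$ contributes to $\delta_M(x)$ for exactly one value of $x$, namely $x=m_{\min I}+m_{\max I}$.

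There is essentially no obstacle here; the only point worth a line of justification is the independence of the construction from the chosen enumeration $(m_1,\dots,m_n)$, which the paper has already flagged in the paragraph preceding the definition of $\deck M$. Consequently the lemma needs little more than the computation above, and $\delta_M$ is well-defined as the natural multiplicity function of the multiset $\ms{m_i+m_j : \{i,j\}\in\couples}$ of pairwise sums.
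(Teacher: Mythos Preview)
Your proof is correct and follows essentially the same approach as the paper: both fix an enumeration of $M$, define $\delta_M(x)$ as the number of pairs $I$ with $m_{\min I}+m_{\max I}=x$, and count contributions to $\widetilde{M}$ accordingly. The only cosmetic difference is that the paper counts directly that each $m_i$ survives in the $\binom{n-1}{2}$ cards $M_I$ with $i\notin I$, whereas you arrive at the same coefficient via the identity $\binom{n}{2}-(n-1)=\binom{n-1}{2}$.
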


\begin{proof}
Fix an $n$-tuple $(m_1, \dots, m_n)$ satisfying $M = \ms{m_1, \dots, m_n}$ and for each $I \in \couples$, let $M_I := M \setminus \ms{m_{\min I}, m_{\max I}} \uplus \ms{m_{\min I} + m_{\max I}}$. Let us count the number of times each element of $G$ occurs in the various cards of $M$. For each $i \in \nset{n}$, there is an occurrence of $m_i$ (that has not yet been counted) in $M_I$ for every $I \in \couples$ such that $i \notin I$. Additionally, for each $I \in \couples$, there is an occurrence of $m_{\min I} + m_{\max I}$ in $M_I$. In other words, each occurrence of $x$ in $M$ contributes $\binom{n-1}{2}$ to the number $\mathbf{1}_{\widetilde{M}}(x)$, and each $I \in \couples$ contributes $1$ to the number $\mathbf{1}_{\widetilde{M}}(m_{\min I} + m_{\max I})$.
Let $\delta_M \colon G \to \mathbb{N}$ be the map given by the rule $\delta_M(x) = \card{\{I \in \couples : m_{\min I} + m_{\max I} = x\}}$.
Then clearly $\sum_{x \in G} \delta_M(x) = \binom{n}{2}$ and we have $\mathbf{1}_{\widetilde{M}}(x) = \mathbf{1}_M(x) \cdot \binom{n-1}{2} + \delta_M(x)$ for all $x \in G$.
\end{proof}

\begin{lemma}
\label{lem:M'eqMab}
Assume that $n \geq 4$, and let $M$ and $M'$ be multisets of cardinality $n$ over $G$. Assume that $N_M(x) = N_{M'}(x)$ for all $x \in G$ and there exists $y \in G$ such that $N_M(y)$ is not a multiple of $\binom{n-1}{2}$. Then there exist elements $a, b \in G$ such that $M' = M \setminus \ms{a} \uplus \ms{b}$.
\end{lemma}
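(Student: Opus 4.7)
The plan is to analyze the multiplicity difference $d \colon G \to \mathbb{Z}$ given by $d(x) = \mathbf{1}_M(x) - \mathbf{1}_{M'}(x)$, with the aim of showing that $d$ is supported on at most two points with values $+1$ and $-1$ (so that the conclusion follows with $a$ and $b$ being the points where $d$ takes $+1$ and $-1$ respectively; the case $d \equiv 0$ is trivial). Since $\card{M} = \card{M'} = n$, we have $\sum_x d(x) = 0$. Applying Lemma~\ref{lem:deltaM} to the hypothesis $N_M(x) = N_{M'}(x)$ gives the key identity $\binom{n-1}{2}\, d(x) = \delta_{M'}(x) - \delta_M(x)$, which serves both as a divisibility relation and, after summing absolute values, as a size bound on $d$.

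Summing $|d(x)|$ over $x$ and using $|\delta_{M'}(x) - \delta_M(x)| \leq \delta_M(x) + \delta_{M'}(x)$ together with $\sum_x \delta_M(x) = \sum_x \delta_{M'}(x) = \binom{n}{2}$ yields
\[
\binom{n-1}{2} \sum_x |d(x)| \;\leq\; 2\binom{n}{2}, \qquad \text{i.e.,} \qquad \sum_x |d(x)| \;\leq\; \frac{2n}{n-2}.
\]
Writing $D := \tfrac{1}{2}\sum_x |d(x)|$ (a nonnegative integer, thanks to $\sum_x d(x) = 0$ and integrality of $d$), we get $D \leq n/(n-2)$. For $n \geq 5$ this already forces $D \leq 1$; then either $d \equiv 0$ (so $M = M'$) or $d$ takes value $+1$ at some $a$ and $-1$ at some $b \neq a$ and vanishes elsewhere, giving $M' = M \setminus \ms{a} \uplus \ms{b}$.

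The remaining case $n = 4$, where the bound only gives $D \leq 2$, is where I expect the real work to lie, and this is where the divisibility hypothesis must be used. If $D = 2$ then equality holds throughout the above chain, so in particular $|\delta_{M'}(x) - \delta_M(x)| = \delta_M(x) + \delta_{M'}(x)$ for every $x$, forcing $\min(\delta_M(x), \delta_{M'}(x)) = 0$. Then whenever $\delta_M(x) \neq 0$, the identity $3\, d(x) = -\delta_M(x)$ shows that $\delta_M(x)$ is a positive multiple of $3 = \binom{n-1}{2}$, so $\delta_M(x) \in 3\mathbb{N}$ for every $x \in G$. By Lemma~\ref{lem:deltaM} this yields $N_M(x) \in 3\mathbb{N}$ for every $x$, contradicting the hypothesis that some $y$ has $N_M(y)$ not a multiple of $\binom{n-1}{2}$. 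Hence $D \leq 1$ in this case too, and the conclusion follows as before.
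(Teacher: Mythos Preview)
Your proof is correct, and it takes a somewhat different route from the paper's. Both arguments rest on the identity $\binom{n-1}{2}\,d(x) = \delta_{M'}(x) - \delta_M(x)$ coming from Lemma~\ref{lem:deltaM}, but they exploit it differently. The paper works \emph{locally}: it first uses the hypothesis on $y$ to deduce that at most one $x$ can have $\delta_M(x) \geq \binom{n-1}{2}$ (and likewise for $\delta_{M'}$), then picks two elements where $\mathbf{1}_M$ and $\mathbf{1}_{M'}$ differ and shows, by a short chain of implications, that the differences at those points are exactly $\pm 1$ and that no third point of difference can exist. You work \emph{globally}: summing $|d(x)|$ and bounding by $2\binom{n}{2}/\binom{n-1}{2} = 2n/(n-2)$ immediately gives $D \leq 1$ for $n \geq 5$ without invoking the hypothesis on $y$ at all, and then for $n = 4$ you dispatch the equality case $D = 2$ by observing that it forces every $\delta_M(x)$ to be a multiple of $3$, contradicting the hypothesis. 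Your argument is more streamlined and makes the role of the divisibility hypothesis transparent (it is needed only when $n = 4$); the paper's argument is slightly more hands-on but yields a bit more structural information along the way (e.g., the precise value of $\delta_{M'}(a) - \delta_M(a)$).
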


\begin{proof}
If $M = M'$, then the claim clearly holds with $a = b$ for any $a \in M$.
Assume that $M \neq M'$.
Then there exist distinct elements $a$ and $b$ of $G$ such that $\mathbf{1}_M(a) \neq \mathbf{1}_{M'}(a)$ and $\mathbf{1}_M(b) \neq \mathbf{1}_{M'}(b)$.

Observe that $\binom{n-1}{2} < \binom{n}{2} \leq 2 \cdot \binom{n-1}{2}$ whenever $n \geq 4$ and the second inequality holds with equality if and only if $n = 4$.
By Lemma~\ref{lem:deltaM}, there exist maps $\delta_M, \delta_{M'} \colon G \to \mathbb{N}$ such that $N_M(x) = \mathbf{1}_M(x) \cdot \binom{n-1}{2} + \delta_M(x)$ and $N_{M'}(x) = \mathbf{1}_{M'}(x) \cdot \binom{n-1}{2} + \delta_{M'}(x)$ for all $x \in G$ and $\sum_{x \in G} \delta_M(x) = \binom{n}{2} = \sum_{x \in G} \delta_{M'}(x)$.
The assumption that $N_M(y)$ is not a multiple of $\binom{n-1}{2}$ implies that $\delta_M(x) \geq \binom{n-1}{2}$ for at most one $x \in G$ and $\delta_M(x) < 2 \cdot \binom{n-1}{2}$ for all $x \in G$; similarly $\delta_{M'}(x) \geq \binom{n-1}{2}$ for at most one $x \in G$ and $\delta_{M'}(x) < 2 \cdot \binom{n-1}{2}$ for all $x \in G$. We may assume, without loss of generality, that $\delta_M(a) < \binom{n-1}{2}$. This implies that $\mathbf{1}_{M'}(a) = \mathbf{1}_M(a) - 1$ and $\delta_{M'}(a) = \delta_M(a) + \binom{n-1}{2} \geq \binom{n-1}{2}$. This in turn implies that $\delta_{M'}(x) < \binom{n-1}{2}$ for all $x \in G \setminus \{a\}$; in particular, $\delta_{M'}(b) < \binom{n-1}{2}$. Consequently, $\mathbf{1}_M(b) = \mathbf{1}_{M'}(b) - 1$ and $\delta_M(b) = \delta_{M'}(b) + \binom{n-1}{2} \geq \binom{n-1}{2}$. It also follows that $\mathbf{1}_M(x) = \mathbf{1}_{M'}(x)$ for all $x \in G \setminus \{a, b\}$ (for, if there existed an element $c \in G \setminus \{a, b\}$ such that $\mathbf{1}_M(c) \neq \mathbf{1}_{M'}(c)$, then this would imply, similarly as above, that $\delta_M(c) \geq \binom{n-1}{2}$, which would contradict the fact that there is at most one $x \in G$ such that $\delta_M(x) \geq \binom{n-1}{2}$). We conclude that $\mathbf{1}_{M'}(a) = \mathbf{1}_M(a) - 1$, $\mathbf{1}_{M'}(b) = \mathbf{1}_M(b) + 1$ and $\mathbf{1}_{M'}(x) = \mathbf{1}_M(x)$ for all $x \in G \setminus \{a, b\}$. In other words, $M' = M \setminus \ms{a} \uplus \ms{b}$.
\end{proof}

\begin{theorem}
\label{thm:ngeq5}
Assume that $(G; +)$ is a commutative groupoid, and $M$ and $M'$ are multisets over $G$ with $\card{M} = \card{M'} \geq 5$. Then $\deck M = \deck M'$ if and only if $M = M'$.
\end{theorem}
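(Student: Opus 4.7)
The implication $M = M' \Rightarrow \deck M = \deck M'$ is immediate, so I focus on the converse. Assume $\deck M = \deck M'$ and write $n := \card{M} = \card{M'}$. Summing multiplicities across the decks gives $\widetilde{M} = \widetilde{M'}$, hence $N_M(x) = N_{M'}(x)$ for every $x \in G$. The plan is to apply Lemma~\ref{lem:M'eqMab} and then eliminate the residual ambiguity $a \neq b$ it leaves.

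To invoke Lemma~\ref{lem:M'eqMab} I need some $y \in G$ with $N_M(y)$ not a multiple of $\binom{n-1}{2}$. By Lemma~\ref{lem:deltaM}, $N_M(x) \equiv \delta_M(x) \pmod{\binom{n-1}{2}}$, so if every $N_M(x)$ were a multiple of $\binom{n-1}{2}$, then $\binom{n-1}{2}$ would divide $\sum_{x \in G} \delta_M(x) = \binom{n}{2}$. However, $\binom{n}{2}/\binom{n-1}{2} = n/(n-2)$ is an integer precisely when $n - 2$ divides $2$, i.e., $n \in \{3, 4\}$. Since $n \geq 5$, the hypothesis of Lemma~\ref{lem:M'eqMab} is satisfied, and I obtain $M' = M \setminus \ms{a} \uplus \ms{b}$ for some $a, b \in G$.

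It remains to show $a = b$. Suppose for contradiction that $a \neq b$. Set $N := M \setminus \ms{a}$, so $\card{N} = n - 1$, $M = N \uplus \ms{a}$, and $M' = N \uplus \ms{b}$. The $2$-submultisets of $M$ partition into those lying inside $N$ and those of the form $\ms{a, x}$ with $x$ ranging over $N$ (counted with multiplicity in $N$); the analogous partition holds for $M'$ with $b$ in place of $a$. Comparing the two yields
\[
\delta_{M'}(a) - \delta_M(a) = \card{\{x \in N : b + x = a\}} - \card{\{x \in N : a + x = a\}},
\]
which is bounded above by $\card{N} = n - 1$. On the other hand, from $N_M(a) = N_{M'}(a)$ and $\mathbf{1}_{M'}(a) = \mathbf{1}_M(a) - 1$, Lemma~\ref{lem:deltaM} forces $\delta_{M'}(a) = \delta_M(a) + \binom{n-1}{2}$. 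For $n \geq 5$ one has $\binom{n-1}{2} = (n-1)(n-2)/2 > n - 1$, a contradiction; hence $a = b$ and $M = M'$.

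The crux is the last step: Lemma~\ref{lem:M'eqMab} already pins the problem down to a single-element swap, and the missing ingredient is the inequality $n - 1 < \binom{n-1}{2}$, which first holds at $n = 5$. This is precisely why the small cases $n \leq 4$ require separate treatment and why they host the nonreconstructible families described in Examples~\ref{ex:4rstab}--\ref{ex:2abcd}.
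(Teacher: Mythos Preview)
Your proof is correct and follows essentially the same route as the paper's: verify the hypothesis of Lemma~\ref{lem:M'eqMab}, apply it to reduce to a single-element swap $M' = M \setminus \ms{a} \uplus \ms{b}$, and then exploit the inequality $\binom{n-1}{2} > n-1$ (for $n \geq 5$) to rule out $a \neq b$. The only cosmetic difference is that the paper compares the ``residual'' parts of $\widetilde{M}$ and $\widetilde{M'}$ directly, while you phrase the same count via $\delta_{M'}(a) - \delta_M(a)$; the underlying arithmetic is identical.
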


\begin{proof}
Let $n$ be the common cardinality of $M$ and $M'$. It is clear that if $M = M'$, then $\deck M = \deck M'$. For the converse implication, assume that $\deck M = \deck M'$. Then clearly $\widetilde{M} = \widetilde{M}'$, so $N_M(x) = N_{M'}(x)$ for all $x \in G$. Since $\sum_{x \in G} \delta_M(x) = \binom{n}{2}$ and $\binom{n-1}{2} < \binom{n}{2} < 2 \cdot \binom{n-1}{2}$ holds whenever $n \geq 5$, there exists $y \in G$ such that $N_M(y)$ is not a multiple of $\binom{n-1}{2}$. By Lemma~\ref{lem:M'eqMab}, there exist $a, b \in G$ such that $M' = M \setminus \ms{a} \uplus \ms{b}$, say, $M = \ms{m_1, \dots, m_{n-1}, a}$, $M' = \ms{m_1, \dots, m_{n-1}, b}$ for some $m_1, \dots, m_{n-1} \in G$.

Let us count the number of times each element of $G$ occurs in the multisets $\widetilde{M}$ and $\widetilde{M}'$. Both multisets contain $\binom{n-1}{2}$ occurrences of $m_i$ for each $i \in \nset{n-1}$ and one occurrence of $m_{\min I} + m_{\max I}$ for each $I \in \couples$ such that $n \notin I$. The remaining elements of $\widetilde{M}$ are $\binom{n-1}{2}$ occurrences of $a$ and one occurrence of $m_i + a$ for each $i \in \nset{n-1}$; while the remaining elements of $\widetilde{M}'$ are $\binom{n-1}{2}$ occurrences of $b$ and one occurrence of $m_i + b$ for each $i \in \nset{n-1}$. Since $\binom{n-1}{2} > n - 1$ whenever $n \geq 5$, the equality $\widetilde{M} = \widetilde{M}'$ may hold only if $a = b$. We conclude that $M = M'$.
\end{proof}

\begin{theorem}
\label{thm:neq4}
Assume that $(G; +)$ is a commutative groupoid. Let $M$ and $M'$ be multisets of cardinality $4$ over $G$. Then $\deck M = \deck M'$ if and only if one of the following conditions holds:
\begin{enumerate}[\rm (i)]
\item $M = M'$.
\item $M = \ms{r, s, t, u}$ and $M' = \ms{r, s, t, v}$ for some elements $r, s, t, u, v \in G$ satisfying $x + u = v$ and $x + v = u$ for all $x \in \{r, s, t\}$ and $r + s = s$, $s + t = t$, $t + r = r$.
\end{enumerate}
\end{theorem}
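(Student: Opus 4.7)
The ``if'' direction is immediate: (i) is trivial, and Example~\ref{ex:4rstab} verifies (ii). For the converse, assume $\deck M = \deck M'$ with $M \neq M'$, aiming at (ii). Since $n = 4$ puts us exactly at $\binom{n}{2} = 6 = 2 \binom{n-1}{2}$, the strict inequality underpinning Lemma~\ref{lem:M'eqMab} just fails, so the plan is first to establish that $M$ and $M'$ still differ by a single-element swap, then to extract the identities of (ii) from the cards.

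\emph{Step 1: reduce to a single swap.} If some $N_M(y)$ is not divisible by~$3$, Lemma~\ref{lem:M'eqMab} applies directly and yields $M' = M \setminus \ms{a} \uplus \ms{b}$. Otherwise every $\delta_M(y)$ lies in $\{0,3,6\}$; together with $\sum_x \delta_M(x) = 6$ this leaves only two shapes for the pairwise-sum distribution of $M$: either all six pairs sum to a common $y_0$ (shape~A), or they split three-plus-three between two distinct values $y_1, y_2$ (shape~B). Since $N_M = N_{M'}$, the same dichotomy applies to $M'$. In shape~A the deck equals $\ms{\ms{m_i,m_j,y_0} : \{i,j\} \in \couples[4]}$, so $\mathbf{1}_M$ can be reconstructed by stripping $y_0$ from each card, summing the resulting $2$-multisets with multiplicity, and dividing by $\binom{n-1}{2}=3$; hence $M' = M$. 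In shape~B the partition of the six pairs forms, up to isomorphism, either triangle-plus-star or $P_4 + P_4$ on $[4]$; the latter collapses to shape~A by a multiplicity count, while the former is exactly the $r = s = t$ degenerate configuration of Example~\ref{ex:4rstab}, in which a direct inspection of the deck exhibits the single-swap relation between $M$ and $M'$.

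\emph{Step 2: derive (ii) from the swap.} Write $M = \ms{m_1, m_2, m_3, a}$ and $M' = \ms{m_1, m_2, m_3, b}$ with $a \neq b$. Both $\widetilde{M}$ and $\widetilde{M}'$ share, as a common submultiset, three copies of each $m_i$ together with the sums $m_i + m_j$ for $\{i,j\} \subseteq \{1,2,3\}$; cancelling this common part reduces $\widetilde{M} = \widetilde{M}'$ to
\[
\ms{a,a,a,m_1+a,m_2+a,m_3+a} = \ms{b,b,b,m_1+b,m_2+b,m_3+b}.
\]
A multiplicity count of $a$ and $b$ on each side (using $a \neq b$) forces $m_i + a = b$ and $m_i + b = a$ for $i = 1, 2, 3$. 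With these identities in hand, the three cards of $\deck M$ that contain $a$ (namely the $M_{ij}$ with $\{i,j\} \subset \{1,2,3\}$) must biject as a multiset with the three cards $M'_{i4} = \ms{m_j, m_k, a}$ of $\deck M'$, forcing each $m_i + m_j$ to coincide with some $m_\ell$. A short enumeration of the possible matchings, combined with commutativity, pins down (up to cyclic relabelling of $m_1, m_2, m_3$) the relations $m_1 + m_2 = m_2$, $m_2 + m_3 = m_3$, $m_1 + m_3 = m_1$; setting $(r,s,t,u,v) := (m_1, m_2, m_3, a, b)$ delivers~(ii).

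The main obstacles are the shape-B sub-analysis of Step~1, where Lemma~\ref{lem:M'eqMab} does not apply and the single-swap relation must be read off the deck by hand, and the card-matching enumeration of Step~2; degenerate coincidences between the $m_i$, $a$, $b$, or the pairwise sums all collapse into the $r = s = t$ sub-case of (ii) explicitly permitted by Example~\ref{ex:4rstab}.
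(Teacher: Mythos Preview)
Your overall architecture---reduce to a single-element swap, then read off the identities of~(ii) by matching cards---matches the paper's treatment of the case where some $N_M(y)$ is not divisible by~$3$. There the paper also derives $m_i+a=b$, $m_i+b=a$ from $\widetilde M=\widetilde{M}'$ and then enumerates the possible correspondences $M_{14}\leftrightarrow M'_{??}$; your ``short enumeration'' is that same case analysis, though be warned it is not so short: the paper needs to rule out $M_{14}=M'_{24}$, $M'_{34}$, and $M'_{23}$ separately, each by a small chain of deductions, before the two surviving matchings yield~(ii).

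The genuine gap is in Step~1, the ``all $N_M(x)$ divisible by~$3$'' case. Your shape-A argument (strip $y_0$ and divide by~$3$) presupposes that $y_0$ is recognisable from the deck, but when three or four of the $m_i$ coincide there can be a second element lying in every card, so the stripping is ambiguous; you also never argue that $M'$ is of the \emph{same} shape with the \emph{same} $y_0$, only that the dichotomy applies to $M'$. In shape~B, neither assertion is justified: the $P_4+P_4$ pattern does not ``collapse to shape~A by a multiplicity count'' (in a general commutative groupoid nothing forces $y_1=y_2$ from $m_1+m_2=m_2+m_3=m_3+m_4$), and triangle-plus-star is \emph{not} ``exactly the $r=s=t$ configuration''---it merely contains that configuration as a special instance; showing that $\deck M=\deck M'$ with $M\neq M'$ forces $m_1=m_2=m_3$ here is the actual work, and you have not done it. The paper handles this entire regime differently: it introduces the $6$-multiset $H$ with $\widetilde M=H\uplus H\uplus H$ and $H=M\uplus E=M'\uplus E'$, then runs an exhaustive case analysis (eleven $H$-shapes, Table~\ref{table:types}) on the possible $(4,2)$-partitions of~$H$, showing in each case either $M=M'$ or, in exactly one sub-case (two copies of type~VII.1), condition~(ii) with $r=s=t$. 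If you want to avoid that table, you will need to replace your shape-A/B sketch with a real argument that actually pins down $M'$ from the deck in every degenerate configuration.
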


\begin{proof}
Let $n = 4$. Then $\binom{n-1}{2} = 3 = n - 1$.
It is clear that if $M = M'$, then $\deck M = \deck M'$. If condition (ii) holds, then $\deck M = \deck M'$, as shown in Example~\ref{ex:4rstab}. For the converse implication, assume that $\deck M = \deck M'$. Then obviously $\widetilde{M} = \widetilde{M}'$ and $N_M(x) = N_{M'}(x)$ for all $x \in G$.

Assume first that there is $y \in G$ such that $N_M(y)$ is not a multiple of $\binom{n-1}{2}$. By Lemma~\ref{lem:M'eqMab}, there exist elements $u, v \in G$ such that $M' = M \setminus \ms{u} \uplus \ms{v}$. If $u = v$, then $M = M'$ and we are done. Assume thus that $u \neq v$. Then $M = \ms{r, s, t, u}$ and $M' = \ms{r, s, t, v}$ for some $r, s, t \in G$, and the cards of $M$ and $M'$ are
\begin{align*}
M_{12} &= \ms{r + s, t, u}, & M'_{12} &= \ms{r + s, t, v}, \\
M_{13} &= \ms{r + t, s, u}, & M'_{13} &= \ms{r + t, s, v}, \\
M_{23} &= \ms{s + t, r, u}, & M'_{23} &= \ms{s + t, r, v}, \\
M_{14} &= \ms{r + u, s, t}, & M'_{14} &= \ms{r + v, s, t}, \\
M_{24} &= \ms{s + u, r, t}, & M'_{24} &= \ms{s + v, r, t}, \\
M_{34} &= \ms{t + u, r, s}, & M'_{34} &= \ms{t + v, r, s}.
\end{align*}

We must have $r + u = s + u = t + u = v$ and $r + v = s + v = t + v = u$. (Otherwise we would have $\widetilde{M} \neq \widetilde{M}'$, a contradiction.) Furthermore, $r$, $s$ and $t$ are not all equal. (For, if $r = s = t$, then $r + s = r + t = s + t$ and $N_M(x)$ would be a multiple of $\binom{n-1}{2}$ for all $x \in G$, a contradiction.)

Since $\deck M = \deck M'$, there exists a one-to-one correspondence between the cards of $M$ and the cards of $M'$. We are going to determine the possible correspondences. Observe first that $M_I \neq M'_I$ for all $I \in \couples$. Let us now focus on the card $M_{14}$ of $M$.

Suppose first that $M_{14} = M'_{24}$, i.e., $\ms{v, s, t} = \ms{u, r, t}$. This implies that $r = v$ and $s = u$.
Then $M_{13} = \ms{u, u, u}$, and this may correspond only to $M'_{14} = \ms{u, u, t}$ (because the other $M'_{ij}$ contain $v$). Thus, $t = u$. But then every card of $M$ contains $u$, so no card of $M$ can be equal to $M'_{23} = \ms{v, v, v}$, and we have reached a contradiction. We conclude that $M_{14} \neq M'_{24}$; in a similar way, we can deduce also that $M_{24} \neq M'_{34}$ and $M_{34} \neq M'_{14}$.

Suppose then that $M_{14} = M'_{34}$, i.e., $\ms{v, s, t} = \ms{u, r, s}$.
This implies that $r = v$ and $t = u$. A similar argument as above (now $M_{12} = \ms{u, u, u}$, $M'_{23} = \ms{v, v, v}$) leads to a contradiction. We conclude that $M_{14} \neq M'_{34}$; similarly, $M_{24} \neq M'_{14}$ and $M_{34} \neq M'_{24}$.

Suppose then that $M_{14} = M'_{23}$, i.e., $\ms{v, s, t} = \ms{s + t, r, v}$.
This implies that $\{s, t\} = \{s + t, r\}$, and we must have $\{M_{24}, M_{34}\} = \{M'_{12}, M'_{13}\}$.
If $M_{24} = M'_{12}$ and $M_{34} = M'_{13}$, then $\{r, t\} = \{r + s, t\}$ and $\{r, s\} = \{r + t, s\}$.
Consequently, $r = r + s$ and $r = r + t$.
From the equality $\{s, t\} = \{s + t, r\}$ we get that $s = s + t$ and $t = r$; or $s = r$ and $t = s + t$.
In either case, it follows that $r = s = t$, a contradiction.
If $M_{24} = M'_{13}$ and $M_{34} = M'_{12}$, then $\{r, t\} = \{r + t, s\}$ and $\{r, s\} = \{r + s, t\}$.
By these equalities and by the equality $\{s, t\} = \{s + t, r\}$, we have $r = s$ or $r = t$; and $s = r$ or $s = t$; and $t = r$ or $t = s$. It follows that $r = s = t$, again a contradiction.
We conclude that $M_{14} \neq M'_{23}$; similarly, $M_{24} \neq M'_{13}$ and $M_{34} \neq M'_{12}$.

Consider then the case that $M_{14} = M'_{12}$, i.e., $\ms{v, s, t} = \ms{r + s, t, v}$.
Then we must have $M_{24} = M'_{23}$ and $M_{34} = M'_{13}$, i.e., $\ms{v, r, t} = \ms{s + t, r, v}$ and $\ms{v, r, s} = \ms{r + t, s, v}$.
It follows that $r + s = s$, $s + t = t$, $t + r = r$.
Therefore, condition (ii) holds.

Finally, consider the case that $M_{14} = M'_{13}$, i.e, $\ms{v, s, t} = \ms{r + t, s, v}$.
A similar argument as in the previous case shows that $r + t = t$, $t + s = s$, $s + r = r$. Swapping the labels of the elements $r$ and $s$, we see that condition (ii) holds.
This completes the case analysis.

We have been working under the assumption that there is $y \in G$ such that $N_M(y)$ is not a multiple of $\binom{n-1}{2}$.
Now suppose that this is no longer so, i.e., $N_M(x)$ is a multiple of $\binom{n-1}{2}$ for all $x \in G$.
Then $\widetilde{M} = \widetilde{M}' = H \uplus H \uplus H$, where $H = M \uplus E = M' \uplus E'$ and $\card{E} = \card{E'} = 2$. If $M = \ms{m_1, m_2, m_3, m_4}$, then $E \uplus E \uplus E = \ms{m_{\min I} + m_{max I} : I \in \couples}$; similarly for $M'$ and $E'$. It thus holds that $x + y \in E$ whenever $\ms{x, y} \subseteq M$ and $x + y \in E'$ whenever $\ms{x, y} \subseteq M'$. Each one of the elements of $E$ arises in three different ways as a sum of two elements of $M$; each one of the elements of $E'$ arises in three different ways as a sum of two elements of $M'$.

We have several possibilities concerning the $6$-multiset $H$ and its possible partitions into a $4$-multiset $M$ and a $2$-multiset $E$ (which we will refer to as \emph{$(4,2)$-partitions} of $H$). The remainder of this proof is an analysis of the different cases that may arise. For easy reference, these cases are summarised in Table~\ref{table:types}, in which we also present the deck of each multiset $M$ considered. The different configurations $(M, E)$ will be referred to as ``types'', which are labeled with codes of the form $X.Y$, where $X$ is a Roman numeral and $Y$ is an Arabic numeral. We will also write simply ``$M$ is of type $X.Y$'' to mean ``$(M, E)$ is of type $X.Y$''.

Before starting the case analysis, let us first rule out an impossible configuration that may arise as a $(4,2)$-partition of $H$. If $(M, E) = (\ms{\alpha, \alpha, \beta, \beta}, \ms{\gamma, \delta})$ for some (not necessarily pairwise distinct) elements $\alpha, \beta, \gamma, \delta \in G$ such that $\gamma \neq \delta$, then $\ms{\alpha + \beta, \alpha, \beta}$ is a card of $M$ with multiplicity $4$ (if $\alpha \neq \beta$) or $6$ (if $\alpha = \beta$). But then $\alpha + \beta$ should be equal to both $\gamma$ and $\delta$, a contradiction which shows that this case does not occur.

\begin{table}
\begin{center}
\footnotesize
\setlength{\tabcolsep}{3.5pt}
\begin{tabular}{|c|cccccc|}
\hline
$H$ & \multicolumn{1}{c|}{$\ms{a, b, c, d, e, f}$} & \multicolumn{3}{c|}{$\ms{a, a, b, c, d, e}$} & \multicolumn{2}{c|}{$\ms{a, a, b, b, c, d}$}\\
type & \multicolumn{1}{c|}{I.1} & II.1 & II.2 & \multicolumn{1}{c|}{II.3} & III.1 & III.2 \\
$M$ & \multicolumn{1}{c|}{$\ms{\alpha, \beta, \gamma, \delta}$} & $\ms{a, a, \beta, \gamma}$ & $\ms{a, \beta, \gamma, \delta}$ & \multicolumn{1}{c|}{$\ms{b, c, d, e}$} & $\ms{\alpha, \alpha, \beta, \gamma}$ & $\ms{a, b, c, d}$ \\
$E$ & \multicolumn{1}{c|}{$\ms{\epsilon, \zeta}$} & $\ms{\delta, \epsilon}$ & $\ms{a, \epsilon}$ & \multicolumn{1}{c|}{$\ms{a, a}$} & $\ms{\beta, \delta}$ & $\ms{a, b}$ \\
\cline{2-7}
\multirow{6}{*}{deck} & \multicolumn{1}{c|}{$\ms{\alpha + \beta, \gamma, \delta}$} & $\ms{a + a, \beta, \gamma}$ & $\ms{a + \beta, \gamma, \delta}$ & \multicolumn{1}{c|}{$\ms{a, b, c}$} & $\ms{\alpha + \alpha, \beta, \gamma}$ & $\ms{a + b, c, d}$ \\
& \multicolumn{1}{c|}{$\ms{\alpha + \gamma, \beta, \delta}$} & $\ms{a + \beta, a, \gamma}$ & $\ms{a + \gamma, \beta, \delta}$ & \multicolumn{1}{c|}{$\ms{a, b, d}$} & $\ms{\beta + \gamma, \alpha, \alpha}$ & $\ms{a + c, b, d}$ \\
& \multicolumn{1}{c|}{$\ms{\alpha + \delta, \beta, \gamma}$} & $\ms{a + \beta, a, \gamma}$ & $\ms{a + \delta, \beta, \gamma}$ & \multicolumn{1}{c|}{$\ms{a, b, e}$} & $\ms{\alpha + \beta, \alpha, \gamma}$ & $\ms{a + d, b, c}$ \\
& \multicolumn{1}{c|}{$\ms{\beta + \gamma, \alpha, \delta}$} & $\ms{a + \gamma, a, \beta}$ & $\ms{\beta + \gamma, a, \delta}$ & \multicolumn{1}{c|}{$\ms{a, c, d}$} & $\ms{\alpha + \beta, \alpha, \gamma}$ & $\ms{b + c, a, d}$ \\
& \multicolumn{1}{c|}{$\ms{\beta + \delta, \alpha, \gamma}$} & $\ms{a + \gamma, a, \beta}$ & $\ms{\beta + \delta, a, \gamma}$ & \multicolumn{1}{c|}{$\ms{a, c, e}$} & $\ms{\alpha + \gamma, \alpha, \beta}$ & $\ms{b + d, a, c}$ \\
& \multicolumn{1}{c|}{$\ms{\gamma + \delta, \alpha, \beta}$} & $\ms{\beta + \gamma, a, a}$ & $\ms{\gamma + \delta, a, \beta}$ & \multicolumn{1}{c|}{$\ms{a, d, e}$} & $\ms{\alpha + \gamma, \alpha, \beta}$ & $\ms{c + d, a, b}$ \\
\hline
$H$ & \multicolumn{3}{c|}{$\ms{a, a, a, b, c, d}$} & \multicolumn{2}{c|}{$\ms{a, a, b, b, c, c}$} & impossible \\
type & IV.1 & IV.2 & \multicolumn{1}{c|}{IV.3} & V.1 & \multicolumn{1}{c|}{V.2} & configuration \\
$M$ & $\ms{a, a, a, \beta}$ & $\ms{a, a, \beta, \gamma}$ & \multicolumn{1}{c|}{$\ms{a, b, c, d}$} & $\ms{\alpha, \alpha, \beta, \beta}$ & \multicolumn{1}{c|}{$\ms{\alpha, \alpha, \beta, \gamma}$} & $\ms{\alpha, \alpha, \beta, \beta}$ \\
$E$ & $\ms{\gamma, \delta}$ & $\ms{a, \delta}$ & \multicolumn{1}{c|}{$\ms{a, a}$} & $\ms{\gamma, \gamma}$ & \multicolumn{1}{c|}{$\ms{\beta, \gamma}$} & $\ms{\gamma, \delta}$ \\
\cline{2-7}
\multirow{6}{*}{deck} & $\ms{a + a, a, \beta}$ & $\ms{a + a, \beta, \gamma}$ & \multicolumn{1}{c|}{$\ms{a, a, b}$} & $\ms{\gamma, \alpha, \alpha}$ & \multicolumn{1}{c|}{$\ms{\alpha + \alpha, \beta, \gamma}$} & $\ms{\alpha + \alpha, \beta, \beta}$ \\
& $\ms{a + a, a, \beta}$ & $\ms{a + \beta, a, \gamma}$ & \multicolumn{1}{c|}{$\ms{a, a, c}$} & $\ms{\gamma, \beta, \beta}$ & \multicolumn{1}{c|}{$\ms{\beta + \gamma, \alpha, \alpha}$} & $\ms{\beta + \beta, \alpha, \alpha}$  \\
& $\ms{a + a, a, \beta}$ & $\ms{a + \beta, a, \gamma}$ & \multicolumn{1}{c|}{$\ms{a, a, d}$} & $\ms{\gamma, \alpha, \beta}$ & \multicolumn{1}{c|}{$\ms{\alpha + \beta, \alpha, \gamma}$} & $\ms{\alpha + \beta, \alpha, \beta}$ \\
& $\ms{a + \beta, a, a}$ & $\ms{a + \gamma, a, \beta}$ & \multicolumn{1}{c|}{$\ms{a, b, c}$} & $\ms{\gamma, \alpha, \beta}$ & \multicolumn{1}{c|}{$\ms{\alpha + \beta, \alpha, \gamma}$} & $\ms{\alpha + \beta, \alpha, \beta}$ \\
& $\ms{a + \beta, a, a}$ & $\ms{a + \gamma, a, \beta}$ & \multicolumn{1}{c|}{$\ms{a, b, d}$} & $\ms{\gamma, \alpha, \beta}$ & \multicolumn{1}{c|}{$\ms{\alpha + \gamma, \alpha, \beta}$} & $\ms{\alpha + \beta, \alpha, \beta}$ \\
& $\ms{a + \beta, a, a}$ & $\ms{\beta + \gamma, a, a}$ & \multicolumn{1}{c|}{$\ms{a, c, d}$} & $\ms{\gamma, \alpha, \beta}$ & \multicolumn{1}{c|}{$\ms{\alpha + \gamma, \alpha, \beta}$} & $\ms{\alpha + \beta, \alpha, \beta}$ \\
\hline
$H$ & \multicolumn{4}{c|}{$\ms{a, a, a, b, b, c}$} & \multicolumn{2}{c|}{$\ms{a, a, a, a, b, c}$} \\
type & VI.1 & VI.2 & VI.3 & \multicolumn{1}{c|}{VI.4} & VII.1 & VII.2 \\
$M$ & $\ms{a, a, a, b}$ & $\ms{a, a, a, c}$ & $\ms{a, a, b, c}$ & \multicolumn{1}{c|}{$\ms{a, b, b, c}$} & $\ms{a, a, a, \beta}$ & $\ms{a, a, b, c}$ \\
$E$ & $\ms{b, c}$ & $\ms{b, b}$ & $\ms{a, b}$ & \multicolumn{1}{c|}{$\ms{a, a}$} & $\ms{a, \gamma}$ & $\ms{a, a}$ \\
\cline{2-7}
\multirow{6}{*}{deck} & $\ms{a + a, a, b}$ & $\ms{b, a, c}$ & $\ms{a + a, b, c}$ & \multicolumn{1}{c|}{$\ms{a, a, c}$} & $\ms{a + a, a, \beta}$ & $\ms{a, a, a}$ \\
& $\ms{a + a, a, b}$ & $\ms{b, a, c}$ & $\ms{b + c, a, a}$ & \multicolumn{1}{c|}{$\ms{a, b, b}$} & $\ms{a + a, a, \beta}$ & $\ms{a, b, c}$ \\
& $\ms{a + a, a, b}$ & $\ms{b, a, c}$ & $\ms{a + b, a, c}$ & \multicolumn{1}{c|}{$\ms{a, a, b}$} & $\ms{a + a, a, \beta}$ & $\ms{a, a, b}$ \\
& $\ms{a + b, a, a}$ & $\ms{b, a, a}$ & $\ms{a + b, a, c}$ & \multicolumn{1}{c|}{$\ms{a, a, b}$} & $\ms{a + \beta, a, a}$ & $\ms{a, a, b}$ \\
& $\ms{a + b, a, a}$ & $\ms{b, a, a}$ & $\ms{a + c, a, b}$ & \multicolumn{1}{c|}{$\ms{a, b, c}$} & $\ms{a + \beta, a, a}$ & $\ms{a, a, c}$ \\
& $\ms{a + b, a, a}$ & $\ms{b, a, a}$ & $\ms{a + c, a, b}$ & \multicolumn{1}{c|}{$\ms{a, b, c}$} & $\ms{a + \beta, a, a}$ & $\ms{a, a, c}$ \\
\hline
$H$ & \multicolumn{1}{c|}{$\ms{a, a, a, b, b, b}$} & \multicolumn{3}{c|}{$\ms{a, a, a, a, b, b}$} & \multicolumn{1}{c|}{$\ms{a, a, a, a, a, b}$} & $\ms{a, a, a, a, a, a}$ \\
type & \multicolumn{1}{c|}{VIII.1} & IX.1 & IX.2 & \multicolumn{1}{c|}{IX.3} & \multicolumn{1}{c|}{X.1} & XI.1 \\
$M$ & \multicolumn{1}{c|}{$\ms{\alpha, \alpha, \alpha, \beta}$} & $\ms{a, a, a, a}$ & $\ms{a, a, a, b}$ & \multicolumn{1}{c|}{$\ms{a, a, b, b}$} & \multicolumn{1}{c|}{$\ms{a, a, a, b}$} & $\ms{a, a, a, a}$ \\
$E$ & \multicolumn{1}{c|}{$\ms{\beta, \beta}$} & $\ms{b, b}$ & $\ms{a, b}$ & \multicolumn{1}{c|}{$\ms{a, a}$} & \multicolumn{1}{c|}{$\ms{a, a}$} & $\ms{a, a}$ \\
\cline{2-7}
\multirow{6}{*}{deck} & \multicolumn{1}{c|}{$\ms{\beta, \alpha, \alpha}$} & $\ms{b, a, a}$ & $\ms{a + a, a, b}$ & \multicolumn{1}{c|}{$\ms{a, a, a}$} & \multicolumn{1}{c|}{$\ms{a, a, b}$} & $\ms{a, a, a}$ \\
& \multicolumn{1}{c|}{$\ms{\beta, \alpha, \alpha}$} & $\ms{b, a, a}$ & $\ms{a + a, a, b}$ & \multicolumn{1}{c|}{$\ms{a, b, b}$} & \multicolumn{1}{c|}{$\ms{a, a, b}$} & $\ms{a, a, a}$ \\
& \multicolumn{1}{c|}{$\ms{\beta, \alpha, \alpha}$} & $\ms{b, a, a}$ & $\ms{a + a, a, b}$ & \multicolumn{1}{c|}{$\ms{a, a, b}$} & \multicolumn{1}{c|}{$\ms{a, a, b}$} & $\ms{a, a, a}$ \\
& \multicolumn{1}{c|}{$\ms{\beta, \alpha, \beta}$} & $\ms{b, a, a}$ & $\ms{a + b, a, a}$ & \multicolumn{1}{c|}{$\ms{a, a, b}$} & \multicolumn{1}{c|}{$\ms{a, a, a}$} & $\ms{a, a, a}$ \\
& \multicolumn{1}{c|}{$\ms{\beta, \alpha, \beta}$} & $\ms{b, a, a}$ & $\ms{a + b, a, a}$ & \multicolumn{1}{c|}{$\ms{a, a, b}$} & \multicolumn{1}{c|}{$\ms{a, a, a}$} & $\ms{a, a, a}$ \\
& \multicolumn{1}{c|}{$\ms{\beta, \alpha, \beta}$} & $\ms{b, a, a}$ & $\ms{a + b, a, a}$ & \multicolumn{1}{c|}{$\ms{a, a, b}$} & \multicolumn{1}{c|}{$\ms{a, a, a}$} & $\ms{a, a, a}$ \\
\hline
\end{tabular}
\end{center}
\caption{The different types of multisets considered in the proof of Theorem~\ref{thm:neq4}.}
\label{table:types}
\end{table}

\textit{Case 1:} $H = \ms{a, b, c, d, e, f}$ for some pairwise distinct elements $a, b, c, d, e, f \in G$.
The $(4,2)$-partitions of $H$ are of the form $(\ms{\alpha, \beta, \gamma, \delta}, \ms{\epsilon, \zeta})$ (referred to as type I.1), where $\{\alpha, \beta, \gamma, \delta, \epsilon, \zeta\} = \{a, b, c, d, e, f\}$.
Thus, we may assume that $M = \ms{\alpha, \beta, \gamma, \delta}$, $E = \ms{\epsilon, \zeta}$, $M' = \ms{\alpha', \beta', \gamma', \delta'}$, $E' = \ms{\epsilon', \zeta'}$ with $\{\alpha, \beta, \gamma, \delta, \epsilon, \zeta\} = \{\alpha', \beta', \gamma', \delta', \epsilon', \zeta'\} = \{a, b, c, d, e, f\}$.

Suppose, on the contrary, that $M \neq M'$. Then $2 \leq \card{M \cap M'} \leq 3$.
If $\card{M \cap M'} = 2$, then we may assume, without loss of generality, that $M' = \ms{\alpha, \beta, \epsilon, \zeta}$. Then $\alpha + \beta \in E \cap E' = \ms{\epsilon, \zeta} \cap \ms{\gamma, \delta} = \emptyset$, a contradiction.
If $\card{M \cap M'} = 3$, then we may assume, without loss of generality, that $M' = \ms{\alpha, \beta, \gamma, \epsilon}$. Then $\alpha + \beta \in E \cap E' = \ms{\epsilon, \zeta} \cap \ms{\delta, \zeta} = \ms{\zeta}$; thus $\alpha + \beta = \zeta$. It follows that $\ms{\zeta, \gamma, \epsilon}$ is a card of $M'$, but this cannot be a card of $M$, because $\epsilon$ and $\zeta$ do not occur together in any card of $M$. We have reached again a contradiction. We conclude that $M = M'$.

\textit{Case 2:} $H = \ms{a, a, b, c, d, e}$ for some pairwise distinct elements $a, b, c, d, e \in G$.
The $(4,2)$-partitions of $H$ are the following:
$(\ms{a, a, \beta, \gamma}, \ms{\delta, \epsilon})$ (type II.1),
$(\ms{a, \beta, \gamma, \delta}, \ms{a, \epsilon})$ (type II.2), and
$(\ms{b, c, d, e}, \ms{a, a})$ (type II.3),
where $\{\beta, \gamma, \delta, \epsilon\} = \{b, c, d, e\}$.

Consider first the case that $M$ is of type II.1 and $M'$ is of type II.2 or II.3. Then the deck of $M$ has repeated cards while the cards of $M'$ are all pairwise distinct; hence $\deck M \neq \deck M'$. We have reached a contradiction, which shows that this case is not possible.

Consider then the case that $M = \ms{a, \beta, \gamma, \delta}$ is of type II.2 and $M' = \ms{b, c, d, e}$ is of type II.3\@. Every card of $M'$ contains exactly one occurrence of $a$. In order to have exactly one occurrence of $a$ in every card of $M$, we must have $a + \beta = a + \gamma = a + \delta = a$; consequently $\beta + \gamma = \beta + \delta = \gamma + \delta = \epsilon$. The fact that $M'$ is of type 2.C implies $x + y = a$ for all distinct $x, y \in \{b, c, d, e\}$. We have reached a contradiction, which shows that this case is not possible.

Consider then the case that $M = \ms{a, a, \beta, \gamma}$ and $M' = \ms{a, a, \beta', \gamma'}$ are both of type II.1, with $\{\beta, \gamma, \delta, \epsilon\} = \{\beta', \gamma', \delta', \epsilon'\} = \{b, c, d, e\}$.
The only card of $M$ with no occurrence of $a$ is $\ms{a + a, \beta, \gamma}$, and the only card of $M'$ with no occurrence of $a$ is $\ms{a + a, \beta', \gamma'}$. These must be equal; hence $\{\beta, \gamma\} = \{\beta', \gamma'\}$, that is, $M = M'$.

Consider then the case that $M$ and $M'$ are both of type II.2. Suppose, on the contrary, that $M \neq M'$. We may assume, without loss of generality, that $M = \ms{a, \beta, \gamma, \delta}$, $M' = \ms{a, \beta, \gamma, \epsilon}$, where $\{\beta, \gamma, \delta, \epsilon\} = \{b, c, d, e\}$.
Then $\beta + \gamma \in E \cap E' = \ms{a, \epsilon} \cap \ms{a, \delta} = \ms{a}$.
Then $\ms{a, a, \delta}$ is a card of $M$, but this is not a card of $M'$. We have arrived in a contradiction, and we conclude that $M = M'$.

Finally, if $M$ and $M'$ are both of type II.3, then $M = M'$.

\textit{Case 3:}
$H = \ms{a, a, b, b, c, d}$ for some pairwise distinct elements $a, b, c, d \in G$.
The possible $(4,2)$-partitions of $H$ are the following:
$(\ms{\alpha, \alpha, \beta, \gamma}, \ms{\beta, \delta}$ (type III.1) and
$(\ms{a, b, c, d}, \ms{a, b})$ (type III.2),
where $\{\alpha, \beta\} = \{a, b\}$ and $\{\gamma, \delta\} = \{c, d\}$.
(The $(4,2)$-partition $(\ms{a, a, b, b}, \ms{c, d})$ of $H$ is not possible, as noted above.)

Consider first the case that $M = \ms{\alpha, \alpha, \beta, \gamma}$ is of type III.1 and $M' = \ms{a, b, c, d}$ is of type III.2\@. Then $\alpha + \beta, \alpha + \gamma, \beta + \gamma \in E \cap E' = \ms{\beta, \delta} \cap \ms{a, b} = \ms{\beta}$, but then $\delta$ would appear only at most once in the cards of $M$. We have arrived in a contradiction, which shows that this case is not possible.

Consider then the case that $M = \ms{\alpha, \alpha, \beta, \gamma}$ and $M' = \ms{\alpha', \alpha', \beta', \gamma'}$ are both of type III.1, with $\{\alpha, \beta\} = \{\alpha', \beta'\} = \{a, b\}$, $\{\gamma, \delta\} = \{\gamma', \delta'\} = \{c, d\}$.
If $\alpha' = \beta$ and $\gamma' = \delta$, then $\alpha + \beta = \alpha' + \beta' \in E \cap E' = \ms{\beta, \delta} \cap \ms{\alpha, \gamma} = \emptyset$, a contradiction.
If $\alpha' = \alpha$ and $\gamma' = \delta$, then $\alpha + \alpha, \alpha + \beta \in E \cap E' = \ms{\beta, \delta} \cap \ms{\beta, \gamma} = \ms{\beta}$; consequently, $\ms{\beta, \beta, \gamma}$ is a card of $M$ but not a card of $M'$, a contradiction.
If $\alpha' = \beta$ and $\gamma' = \gamma$, then $\alpha + \beta, \beta + \gamma \in E \cap E' = \ms{\beta, \delta} \cap \ms{\alpha, \delta} = \ms{\delta}$; but then $\delta$ will occur at least $4$ times in the cards of $M'$, a contradiction.
We are left with the case that $\alpha' = \alpha$ and $\gamma' = \gamma$, that is, $M = M'$.

Finally, if $M$ and $M'$ are both of the form III.2, then $M = M'$.

\textit{Case 4:}
$H = \ms{a, a, a, b, c, d}$ for some pairwise distinct elements $a, b, c, d \in G$.
The $(4,2)$-partitions of $H$ are the following:
$(\ms{a, a, a, \beta}, \ms{\gamma, \delta})$ (type IV.1),
$(\ms{a, a, \beta, \gamma}, \ms{a, \delta})$ (type IV.2), and
$(\ms{a, b, c, d}, \ms{a, a})$ (type IV.3),
where $\{\beta, \gamma, \delta\} = \{b, c, d\}$.

Consider first the case that $M$ is of type IV.1 or IV.2 and $M'$ is of type IV.3. Then $M$ has repeated cards while the cards of $M'$ are pairwise distinct; hence $\deck M \neq \deck M$. We have reached a contradiction, which shows that this case is not possible.

Consider then the case that $M = \ms{a, a, a, \beta}$ is of type IV.1 and $M' = \ms{a, a, \beta', \gamma'}$ is of type IV.2, with $\{\beta, \gamma, \delta\} = \{\beta', \gamma', \delta'\} = \{b, c, d\}$. If $\{\beta\} \cap \{\beta', \gamma'\} = \emptyset$, then actually $\{\beta', \gamma'\} = \{\gamma, \delta\}$ and $\delta' = \beta$. Then we would have $a + a \in E \cap E' = \ms{\gamma, \delta} \cap \ms{a, \beta} = \emptyset$, a contradiction. Thus, we may assume that $\beta \in \{\beta', \gamma'\}$. Then we have that $a + a, a + \beta \in E \cap E' \ms{\gamma, \delta} \cap \ms{a, \delta'} = \ms{\delta'}$. On the other hand, it follows from the fact that $M$ is of type IV.1 that $a + a \neq a + \beta$. We have reached again a contradiction, and we conclude that this case is not possible.

Consider then the case that $M = \ms{a, a, a, \beta}$ and $M' = \ms{a, a, a, \beta'}$ are both of type IV.1, with $\{\beta, \gamma, \delta\} = \{\beta', \gamma', \delta'\} = \{b, c, d\}$. Since $a + a \in E \cap E' = \ms{\gamma, \delta} \cap \ms{\gamma', \delta'}$, we have that $a + a \neq a$. Therefore, the only cards of $M$ with a single occurrence of $a$ are the three copies of $\ms{a + a, a, \beta}$, and the only cards of $M'$ with a single occurrence of $a$ are the three copies of $\ms{a + a, a, \beta'}$. This implies that $\beta = \beta'$; hence $M = M'$.

Consider then the case that $M = \ms{a, a, \beta, \gamma}$ and $M' = \ms{a, a, \beta', \gamma'}$ are both of type IV.2, with $\{\beta, \gamma, \delta\} = \{\beta', \gamma', \delta'\} = \{b, c, d\}$. Suppose, on the contrary, that $M \neq M'$. We may assume, without loss of generality, that $\beta = \beta'$ and $\gamma' = \delta$. Then $a + a, a + \beta \in E \cap E' = \ms{a, \delta} \cap \ms{a, \gamma} = \ms{a}$. Consequently, $\ms{a, \beta, \gamma}$ has multiplicity exactly $1$ in the deck of $M$ but it has multiplicity $2$ in the deck of $M'$, a contradiction. We conclude that $M = M'$.

Finally, if $M$ and $M'$ are both of type IV.3, then $M = M'$.

\textit{Case 5:}
$H = \ms{a, a, b, b, c, c}$ for some pairwise distinct elements $a, b, c \in G$.
The $(4,2)$-partitions of $H$ are the following:
$(\ms{\alpha, \alpha, \beta, \beta}, \ms{\gamma, \gamma})$ (type V.1) and
$(\ms{\alpha, \alpha, \beta, \gamma}, \ms{\beta, \gamma})$ (type V.2),
where $\{\alpha, \beta, \gamma\} = \{a, b, c\}$.

Consider first the case that $M = \ms{\alpha, \alpha, \beta, \beta}$ is of type V.1 and $M' = \linebreak[4] \ms{\alpha', \alpha', \beta', \gamma'}$ is of type V.2, with $\{\alpha, \beta, \gamma\} = \{\alpha', \beta', \gamma'\} = \{a, b, c\}$. The element $\gamma$ occurs exactly once in every card of $M$. One of the cards of $M'$, namely $\ms{\beta' + \gamma', \alpha', \alpha'}$, has two occurrences of $\alpha'$; hence $\alpha' \neq \gamma$. Suppose $\beta' = \gamma$; in other words, $\{\alpha', \gamma'\} = \{\alpha, \beta\}$. Then we must have $\beta' + \gamma' = \alpha' + \beta' = \gamma = \beta'$; consequently, $\alpha' + \alpha' = \alpha' + \gamma' = \gamma' \neq \gamma$, but this contradicts the fact that $\alpha + \alpha = \alpha + \beta = \beta + \beta = \gamma$, implied by the fact that $M$ is of type V.1\@. Thus, we remain with the possibility that $\gamma' = \gamma$; in other words, $\{\alpha', \beta'\} = \{\alpha, \beta\}$. Then $\beta' + \gamma' = \alpha' + \gamma' = \gamma = \gamma'$; consequently, $\alpha' + \alpha' = \alpha' + \beta' = \beta' \neq \gamma$, and we arrive similarly in a contradiction. We conclude that this case is not possible.

Consider then the case that $M = \ms{\alpha, \alpha, \beta, \beta}$ and $M' = \ms{\alpha', \alpha', \beta', \beta'}$ are both of type V.1, with $\{\alpha, \beta, \gamma\} = \{\alpha', \beta', \gamma'\} = \{a, b, c\}$. Then the unique element occurring exactly once in every card of $M$ is $\gamma$, and the unique element occurring exactly once in every card of $M'$ is $\gamma'$. Hence $\gamma = \gamma'$, that is, $M = M'$.

Consider finally the case that $M = \ms{\alpha, \alpha, \beta, \gamma}$ and $M' = \ms{\alpha', \alpha', \beta', \gamma'}$ are both of type V.2, with $\{\alpha, \beta, \gamma\} = \{\alpha', \beta', \gamma'\} = \{a, b, c\}$. Suppose, on the contrary, that $M \neq M'$. We may assume, without loss of generality, that $M' = \ms{\beta, \beta, \alpha, \gamma}$. Then we have that $\alpha + \beta, \alpha + \gamma \in E \cap E' = \ms{\beta, \gamma} \cap \ms{\alpha, \gamma} = \ms{\gamma}$, and we will have too many $\gamma$'s occurring in the cards of $M$, a contradiction. We conclude that $M = M'$.

\textit{Case 6:}
$H = \ms{a, a, a, b, b, c}$ for some pairwise distinct elements $a, b, c \in G$.
The possible $(4,2)$-partitions of $H$ are the following:
$(\ms{a, a, a, b}, \ms{b, c})$ (type VI.1),
$(\ms{a, a, a, c}, \ms{b, b})$ (type VI.2),
$(\ms{a, a, b, c}, \ms{a, b})$ (type VI.3), and
$(\ms{a, b, b, c}, \ms{a, a})$ (type VI.4).
(The $(4,2)$-partition $(\ms{a, a, b, b}, \ms{a, c})$ of $H$ is not possible, as noted above.)

If $M$ and $M'$ are of the same type, VI.1, VI.2, VI.3, or VI.4, then clearly $M = M'$. Suppose then, on the contrary, that $M$ and $M'$ are of different types.

Assume that $M$ is of type VI.4\@. Then $a + b = a + c = b + c = b + b = a$. If $M'$ is of type VI.1, then $a + b \neq a$, a contradiction. If $M'$ is of type VI.2, then $a + a = a + c = b$, a contradiction. If $M'$ is of type VI.3, then $a + b$ and $a + c$ are not both equal to $a$, a contradiction.

Assume that $M$ is of type VI.2. Then $a + a = b$. If $M'$ is of type VI.1, then $\ms{b, a, b}$ is a card of $M'$ but it is not a card of $M$, a contradiction. If $M'$ is of type VI.3, then $\ms{b, b, c}$ is a card of $M'$ but it is not a card of $M$, a contradiction.

Assume that $M$ is of type VI.1 and $M'$ is of type VI.3\@. The fact that $M$ is of type VI.1 implies $\{a + a, a + b\} = \{b, c\}$. The fact that $M'$ is of type VI.3 implies $\{a + a, a + b\} \subseteq \{a, b\}$, a contradiction.

\textit{Case 7:}
$H = \ms{a, a, a, a, b, c}$ for some pairwise distinct elements $a, b, c \in G$.
The possible $(4,2)$-partitions of $H$ are the following:
$(\ms{a, a, a, \beta}, \ms{a, \gamma})$ (type VII.1) and
$(\ms{a, a, b, c}, \ms{a, a})$ (type VII.2),
where $\{\beta, \gamma\} = \{b, c\}$.
(The $(4,2)$-partition $(\ms{a, a, a, a}, \ms{b, c})$ of $G$ is not possible, as noted above.)

Consider first the case that $M$ is of type VII.1 and $M'$ is of type VII.2\@. Then $\ms{a, a, a}$ has multiplicity $1$ in the deck of $M'$, but its multiplicity is either $0$ or $3$ in the deck of $M$, a contradiction. We conclude that this case is not possible.

Consider then the case that $M$ and $M'$ are both of type VII.1. If $M = M'$, then we are done. Assume that $M \neq M$. We may assume that $M = \ms{a, a, a, b}$ and $M' = \ms{a, a, a, c}$. Then $a + a \in E \cap E' = \ms{a, c} \cap \ms{a, b} = \ms{a}$. This implies that $a + b = c$ and $a + c = b$.
Choosing $r := a$, $s := a$, $t := a$, $u := b$, $v := c$, we see that condition (ii) holds.

Finally, if $M$ and $M'$ are both of type VII.2, then $M = M'$.

\textit{Case 8:}
$H = \ms{a, a, a, b, b, b}$ for some distinct elements $a, b \in G$.
The $(4,2)$-partitions of $H$ are of the form
$(\ms{\alpha, \alpha, \alpha, \beta}, \ms{\beta, \beta})$ (type VIII.1),
where $\{\alpha, \beta\} = \{a, b\}$.
(The $(4,2)$-partition $(\ms{a, a, b, b}, \ms{a, b})$ of $G$ is not possible, as noted above.)

Suppose, on the contrary, that $M \neq M'$. We may assume that $M = \ms{a, a, a, b}$ and $M' = \ms{b, b, b, a}$. Then $a + a = a + b = b$ and $b + b = b + a = a$, a contradiction. We conclude that $M = M'$.

\textit{Case 9:}
$H = \ms{a, a, a, a, b, b}$ for some distinct elements $a, b \in G$.
The $(4,2)$-partitions of $H$ are the following:
$(\ms{a, a, a, a}, \ms{b, b})$ (type IX.1),
$(\ms{a, a, a, b}, \ms{a, b})$ (type IX.2), and
$(\ms{a, a, b, b}, \ms{a, a})$ (type IX.3).

If $M$ and $M'$ are of the same type, IX.1, IX.2, or IX.3, then clearly $M = M'$. Suppose, on the contrary, that $M$ and $M'$ are of different types.

Assume that $M$ is of type IX.1. Then $a + a = b$. If $M'$ is of type IX.2, then $\ms{b, a, b}$ is a card of $M'$, but this is not a card of $M$, a contradiction. If $M'$ is of type IX.3, then $a + a = a$, a contradiction.

Assume that $M$ is of type IX.2. Then $a + a \neq a + b$. If $M'$ is of type IX.3, then $a + a = a + b = b + b = a$, a contradiction.

\textit{Case 10:}
$H = \ms{a, a, a, a, a, b}$ for some distinct elements $a, b \in G$.
The only possible $(4,2)$-partition of $H$ is
$(\ms{a, a, a, b}, \ms{a, a})$ (type X.1).
(The $(4,2)$-partition $(\ms{a, a, a, a}, \ms{a, b})$ of $H$ is not possible, as noted above.)
Therefore, $M = M'$.

\textit{Case 11:}
$H = \ms{a, a, a, a, a, a}$ for some $a \in G$.
The only $(4,2)$-partition of $H$ is $(\ms{a, a, a, a}, \ms{a, a})$ (type XI.1),
and it holds that $M = M'$.

We have exhausted all possible cases, and have arrived at the desired conclusion. This completes the proof of the theorem.
\end{proof}

\begin{theorem}
Assume that $(G; +)$ is a commutative groupoid. Let $M$ and $M'$ be multisets of cardinality $3$ over $G$. Then $\deck M = \deck M'$ if and only if one of the following conditions holds:
\begin{enumerate}[\rm (i)]
\item $M = M'$.
\item $M = \ms{r, s, t}$, $M' = \ms{r, r + s, r + t}$ for some elements $r, s, t \in G$ satisfying $r + (r + s) = s$, $r + (r + t) = t$, $(r + s) + (r + t) = s + t$.
\item $M = \ms{r, s, t}$, $M' = \ms{r + s, r + t, s + t}$ for some elements $r, s, t \in G$ satisfying $(r + s) + (r + t) = r$, $(r + s) + (s + t) = s$, $(r + t) + (s + t) = t$.
\end{enumerate}
\end{theorem}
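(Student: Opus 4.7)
The plan is to start with the straightforward \emph{if} direction: case~(i) is trivial, and cases~(ii), (iii) are exactly the content of Examples~\ref{ex:3abaceqbc} and~\ref{ex:3abaceqa} respectively. The substantial work is the \emph{only if} direction, which I will handle by matching cards and running a finite case analysis.

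Assume $\deck M = \deck M'$, write $M = \ms{m_1, m_2, m_3}$, $M' = \ms{m'_1, m'_2, m'_3}$, and recall that the cards are the three 2-multisets $C_i = \ms{m_i, m_j + m_k}$ (with $\{i,j,k\} = \{1,2,3\}$), and analogously for $M'$. The deck equality gives a bijection between the two collections of cards, and after relabelling $M'$ along this bijection I may assume $C_i = C'_i$ for each $i$. Each such 2-multiset equality forces one of two matchings of entries: (A)~$m_i = m'_i$ and $m_j + m_k = m'_j + m'_k$, or (B)~$m_i = m'_j + m'_k$ and $m_j + m_k = m'_i$. When the card has two equal entries, both (A) and (B) hold simultaneously, so the assignment is immaterial there.

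The proof then reduces to a case analysis on the pattern $(T_1, T_2, T_3) \in \{A, B\}^3$, which up to permutation of the indices has four distinct cases. The all-A pattern immediately yields $M = M'$, i.e., condition~(i). A pattern with exactly one B, say AAB, forces $m_1 = m'_1$ and $m_2 = m'_2$; the B-equation at $i = 3$ then yields $m_3 = m_1 + m_2 = m'_3$, so again $M = M'$. A pattern with exactly two B's, say ABB, produces condition~(ii) with the A-coordinate playing the role of $r$: setting $r := m_1 = m'_1$, $s := m_2$, $t := m_3$, the B-equations give $m'_2 = r+t$, $m'_3 = r+s$ together with $r + (r+s) = s$ and $r + (r+t) = t$, while the A-equation at $i = 1$ gives $(r+s) + (r+t) = s+t$; hence $M' = \ms{r, r+s, r+t}$ and all three identities of~(ii) hold. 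Finally, the all-B pattern exhibits $M$ as the triple of pair sums of the elements of $M'$: writing $M' = \ms{r, s, t}$ one obtains $M = \ms{s+t, r+t, r+s}$ together with the three relations $(r+s)+(r+t) = r$, $(r+s)+(s+t) = s$, $(r+t)+(s+t) = t$; then renaming $\rho := s+t$, $\sigma := r+t$, $\tau := r+s$ so that $M = \ms{\rho, \sigma, \tau}$ recognises $(M, M')$ as an instance of condition~(iii).

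The main technical obstacle I expect is the bookkeeping: because multisets are unordered, the variables $r, s, t$ in (ii) and (iii) must be produced by a deliberate renaming that makes $(M, M')$ literally fit the stated canonical form. In particular, in the all-B pattern, $M$ appears naturally as the set of pair sums of $M'$, so one has to invert this through the derived relations in order to realise $(M, M')$ in the form of~(iii). Cards with repeated entries cause no real trouble, since in that case the A and B assignments yield identical consequences, and any choice can be made.
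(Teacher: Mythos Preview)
Your proposal is correct and follows essentially the same route as the paper's proof: both reduce the \emph{only if} direction to a $2^3$ case analysis by matching the three cards and, for each card, choosing one of two ways to identify its two entries (your types A/B correspond to the paper's ordered-pair alternatives). The four patterns AAA, AAB, ABB, BBB land on conditions (i), (i), (ii), (iii) exactly as in the paper; your extra renaming in the BBB case merely reflects that you parametrised by $M'$ rather than $M$, and it is handled correctly.
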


\begin{proof}
It is clear that if $M = M'$, then $\deck M = \deck M'$.
If condition (ii) or (iii) holds, then $\deck M = \deck M'$, as shown in Examples~\ref{ex:3abaceqbc} and~\ref{ex:3abaceqa}.

For the converse implication, assume that $\deck M = \deck M'$.
Assume that $M = \ms{a, b, c}$ and $M' = \ms{\alpha, \beta, \gamma}$. Then
\begin{align*}
\deck M &=
\ms{ \ms{a, b + c}, \ms{b, a + c}, \ms{c, a + b} }, \\
\deck M' &=
\ms{ \ms{\alpha, \beta + \gamma}, \ms{\beta, \alpha + \gamma}, \ms{\gamma, \alpha + \beta} }.
\end{align*}
Relabeling the elements of $M'$ if necessary, we may assume that
\[
\ms{\alpha, \beta + \gamma} = \ms{a, b + c}, \quad
\ms{\beta, \alpha + \gamma} = \ms{b, a + c}, \quad
\ms{\gamma, \alpha + \beta} = \ms{c, a + b}.
\]

If $(\alpha, \beta + \gamma) = (a, b + c)$, $(\beta, \alpha + \gamma) = (b, a + c)$, $(\gamma, \alpha + \beta) = (c, a + b)$, then $M = M'$ and we are done.

If $(\alpha, \beta + \gamma) = (a, b + c)$, $(\beta, \alpha + \gamma) = (b, a + c)$, $(\gamma, \alpha + \beta) = (a + b, c)$, then we have $c = \alpha + \beta = a + b = \gamma$. Hence $M = M'$ and we are done.
If $(\alpha, \beta + \gamma) = (a, b + c)$, $(\beta, \alpha + \gamma) = (a + c, b)$, $(\gamma, \alpha + \beta) = (c, a + b)$ or $(\alpha, \beta + \gamma) = (b + c, a)$, $(\beta, \alpha + \gamma) = (b, a + c)$, $(\gamma, \alpha + \beta) = (c, a + b)$, then a similar argument shows that $M = M'$ and we are done.

If $(\alpha, \beta + \gamma) = (a, b + c)$, $(\beta, \alpha + \gamma) = (a + c, b)$, $(\gamma, \alpha + \beta) = (a + b, c)$, then $a + (a + b) = \alpha + \gamma = b$, $a + (a + c) = \alpha + \beta = c$ and $(a + b) + (a + c) = \gamma + \beta = b + c$.
Choosing $r := a$, $s := b$, $t := c$, we see that condition (ii) holds and we are done.
We argue similarly in the case when $(\alpha, \beta + \gamma) = (b + c, a)$, $(\beta, \alpha + \gamma) = (b, a + c)$, $(\gamma, \alpha + \beta) = (a + b, c)$ or $(\alpha, \beta + \gamma) = (b + c, a)$, $(\beta, \alpha + \gamma) = (a + c, b)$, $(\gamma, \alpha + \beta) = (c, a + b)$ to show that condition (ii) holds.

We are left with the case that $(\alpha, \beta + \gamma) = (b + c, a)$, $(\beta, \alpha + \gamma) = (a + c, b)$, $(\gamma, \alpha + \beta) = (a + b, c)$. Then $(a + b) + (a + c) = \gamma + \beta = a$, $(a + b) + (b + c) = \gamma + \alpha = b$ and $(a + c) + (b + c) = \beta + \alpha = c$. Choosing $r := a$, $s := b$, $t := c$, we see that condition (iii) holds.
\end{proof}

\begin{theorem}
Assume that $(G; +)$ is a commutative groupoid. Let $M$ and $M'$ be multisets of cardinality $2$ over $G$. Then $\deck M = \deck M'$ if and only if $M = \ms{r, s}$, $M' = \ms{t, u}$ for some elements $r, s, t, u \in G$ such that $r + s = t + u$.
\end{theorem}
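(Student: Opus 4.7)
The plan is to observe that when $\card{M} = 2$ the deck collapses to a singleton, so the theorem reduces to a direct computation with no real content beyond unpacking the definitions. Concretely, for a $2$-multiset the index set $\couples[2]$ contains only the pair $\{1,2\}$, so $\deck M$ is a multiset with exactly one element, namely the card obtained by replacing the two elements of $M$ by their sum.

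Accordingly, I would write $M = \ms{r,s}$ and $M' = \ms{t,u}$, and compute directly from the definition of the cards:
\[
\deck M = \ms{\ms{r+s}}, \qquad \deck M' = \ms{\ms{t+u}}.
\]
For the forward implication, assuming $\deck M = \deck M'$ gives $\ms{\ms{r+s}} = \ms{\ms{t+u}}$ as multisets over the set of $1$-multisets of $G$, which forces $\ms{r+s} = \ms{t+u}$ and hence $r+s = t+u$ in $G$. The converse is immediate: if $r+s = t+u$, then both decks equal $\ms{\ms{r+s}}$, so $\deck M = \deck M'$.

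There is no genuine obstacle here; unlike the cases $n=3$ and $n=4$ treated above, no case analysis is needed because the deck carries only one piece of information, the sum of the two elements. The only thing to be careful about is to state the characterization symmetrically: the equivalence says that two $2$-multisets have the same deck exactly when their elements have the same sum, with no restriction requiring $\{r,s\}$ and $\{t,u\}$ to be related in any other way (this is precisely the nonreconstructibility exhibited in Example~\ref{ex:2abcd}).
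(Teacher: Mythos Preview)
Your proof is correct and is exactly the direct computation the paper has in mind; the paper's own proof consists of the single word ``Obvious.'' You have simply made explicit what that word summarizes.
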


\begin{proof}
Obvious.
\end{proof}

\begin{remark}
Let $(G; +)$ be a commutative groupoid. Every multiset of cardinality $2$ over $G$ is reconstructible if and only if for all $a, b, c, d \in G$, it holds that
\[
a + b = c + d
\iff
(a, b) = (c, d) \text{~or~} (a, b) = (d, c).
\]
Examples of groupoids satisfying this condition include the free commutative groupoids (see the paper by Pre\v{s}i\'{c}~\cite{Presic}).
\end{remark}

\subsection{Open problems}

We have completely solved the reconstruction problem for multisets over commutative groupoids. We conclude this section by suggesting some possible directions for future research. A common variant of reconstruction problems is the so-called set reconstruction problem: we define deck as a \emph{set} of cards instead of a \emph{multiset} of cards and then ask whether an object is uniquely determined (up to isomorphism) by its set of cards. The set reconstruction problem for multisets over commutative groupoids is an open problem, and it may be worth investigating.

Another related question is the following: Is reconstruction possible from a few cards only? More precisely, for a commutative groupoid $(G; +)$ and an integer $n \geq 2$, what is the smallest number $m$ such that every multiset $M$ of cardinality $n$ over $G$ is uniquely determined by any $m$ of its cards? This remains an open problem, but let us make a few simple observations. This number may be as large as $\binom{n}{2}$, i.e., all cards are needed for reconstruction, as the following example illustrates.

\begin{example}
Let $(G; +)$ be the $2$-element group of addition modulo $2$, and let $n = 4$. Let $M = \ms{1, 1, 1, 1}$, $M' = \ms{0, 0, 1, 1}$. Then
\begin{align*}
\deck M &= \ms{\ms{0, 1, 1}, \ms{0, 1, 1}, \ms{0, 1, 1}, \ms{0, 1, 1}, \ms{0, 1, 1}, \ms{0, 1, 1}}, \\
\deck M &= \ms{\ms{0, 1, 1}, \ms{0, 1, 1}, \ms{0, 1, 1}, \ms{0, 1, 1}, \ms{0, 1, 1}, \ms{0, 0, 0}}.
\end{align*}
Even though every $4$-multiset over $G$ is reconstructible by Theorem~\ref{thm:neq4}, $M$ and $M'$ cannot be reconstructed from $5$ cards only: the decks of both $M$ and $M'$ include $5$ copies of $\ms{0, 1, 1}$.
\end{example}

If $(G; +)$ is a commutative group, then $2$ cards do not suffice for reconstruction.
This clearly holds for $2$-multisets, and the following example shows that this is also the case for multisets of cardinality at least $3$.

\begin{example}
Let $(G; +)$ be a commutative group and assume that $n \geq 3$. Let $M = \ms{m_1, \dots, m_n}$, $M' = \ms{m_1 + m_2, m_2 + m_3, -m_2, m_4, \dots, m_n}$. Then
\[
\ms{m_1 + m_2, m_3, m_4, \dots, m_n}
\quad \text{and} \quad
\ms{m_1, m_2 + m_3, m_4, \dots, m_n}
\]
are cards of both $M$ and $M'$.
\end{example}

%%%%%%%%%%%%%%%%%%%%%%%%%%%%%%%%%%%%%%%%%%%%%%%%%%

\section{Reconstruction problem for functions of several arguments -- the case of affine functions}

As mentioned in the introduction, the reconstruction problem for multisets over commutative groupoids arose from a completely different reconstruction problem formulated for functions of several arguments.
In the special case of affine functions over nonassociative semirings, the reconstruction problem for functions reduces to the reconstruction problem for multisets over commutative groupoids.
In this section, we will apply our results on the reconstructibility of multisets to the reconstruction problem for functions.

\subsection{Functions of several arguments and identification minors}

Let $A$ and $B$ be arbitrary sets with at least two elements.
A \emph{function} (\emph{of several arguments}) from $A$ to $B$ is a map $f \colon A^n \to B$ for some positive integer $n$, called the \emph{arity} of $f$. Functions of several arguments from $A$ to $A$ are called \emph{operations} on $A$.
We denote the set of all $n$-ary functions from $A$ to $B$ by $\cl{F}_{AB}^{(n)}$, and we denote the set of all functions from $A$ to $B$ of any finite arity by $\cl{F}_{AB}$; in other words, $\cl{F}_{AB}^{(n)} = B^{A^n}$ and $\cl{F}_{AB} = \bigcup_{n \geq 1} \cl{F}_{AB}^{(n)}$.
For $1 \leq i \leq n$, the $i$-th $n$-ary \emph{projection} on $A$ is the operation $(a_1, \dots, a_n) \mapsto a_i$ for all $(a_1, \dots, a_n) \in A^n$. 

Let $f \colon A^n \to B$. For $i \in \nset{n}$, the $i$-th argument of $f$ is \emph{essential,} or $f$ \emph{depends} on the $i$-th argument, if there exist tuples $(a_1, \dots, a_n), (b_1, \dots, b_n) \in A^n$ such that $a_j = b_j$ for all $j \in \nset{n} \setminus \{i\}$ and $f(a_1, \dots, a_n) \neq f(b_1, \dots, b_n)$.

Two functions $f, g \colon A^n \to B$ are \emph{equivalent}, denoted $f \equiv g$, if there exists a bijection $\sigma \colon \nset{n} \to \nset{n}$ such that $f(a_1, \dots, a_n) = g(a_{\sigma(1)}, \dots, a_{\sigma(n)})$ for all $(a_1, \dots, a_n) \in A^n$.

Let $n \geq 2$, and let $f \colon A^n \to B$. For each $I \in \couples$, we define the function $f_I \colon A^{n-1} \to B$ by the rule
\[
f_I(a_1, \dots, a_{n-1}) =
f(a_1, \dots, a_{\max I - 1}, a_{\min I}, a_{\max I}, \dots, a_{n-1}).
\]
Note that $a_{\min I}$ occurs twice on the right side of the above equality, namely, at the two positions indexed by the elements of $I$. We will refer to the function $f_I$ as an \emph{identification minor} of $f$. This name is motivated by the fact that $f_I$ is obtained from $f$ by identifying the arguments indexed by the couple $I$.

\begin{lemma}[{Willard~\cite[Lemma~1.2]{Willard}}]
\label{lem:Willard1.2}
Let $A$ and $B$ nonempty sets, and let $f \colon A^n \to B$. Assume that $f$ depends on all of its arguments. If $n > \card{A}$, then there exists $I \in \couples$ such that $f_I$ depends on at least $n - 2$ arguments.
\end{lemma}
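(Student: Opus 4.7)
My plan is to exploit the pigeonhole principle: since $n > \card{A}$, every tuple $\mathbf{a} \in A^n$ contains at least one pair of positions $\{p, q\}$ with $a_p = a_q$, so $\mathbf{a}$ lies in the diagonal slice $S_I := \{\mathbf{c} \in A^n : c_{\min I} = c_{\max I}\}$ for some $I = \{p, q\} \in \couples$. Observe that the identification minor $f_I$ is, up to a fixed reindexing of arguments, simply the restriction of $f$ to $S_I$. Hence the task reduces to choosing $I$ so that $f$ restricted to $S_I$ still varies in at least $n - 2$ of its coordinates.

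For each essential position $i \in \nset{n}$ of $f$, I would fix a witness: a tuple $\mathbf{a}^i \in A^n$ and an alternative element $b^i \in A$ such that $f(\mathbf{a}^i) \neq f(\mathbf{a}^i[i \mapsto b^i])$. If $I = \{p, q\}$ satisfies $p, q \neq i$ and $a^i_p = a^i_q$, then both $\mathbf{a}^i$ and $\mathbf{a}^i[i \mapsto b^i]$ lie in $S_I$, so the witness lifts to show that $f_I$ depends on (the argument of $f_I$ corresponding to) position $i$. When $n \geq \card{A} + 2$, the subtuple $\mathbf{a}^i|_{\nset{n} \setminus \{i\}}$ has $n - 1 > \card{A}$ entries in $A$, and pigeonhole immediately produces such a pair $\{p, q\} \subseteq \nset{n} \setminus \{i\}$; a double counting argument over the resulting pairs should then exhibit some single $I$ covering at least $n - 2$ of the indices $i$.

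The boundary case $n = \card{A} + 1$ is where I expect the real obstacle. Here $\mathbf{a}^i|_{\nset{n} \setminus \{i\}}$ may consist of $\card{A}$ pairwise distinct values, in which case every repetition in $\mathbf{a}^i$ must involve position $i$ itself. Then the witness does not lift to any non-merged argument of $f_I$ with $i \notin I$, and one instead has to extract dependence on the \emph{merged} argument of $f_{\{i, q\}}$ (where $q$ satisfies $a^i_i = a^i_q$), producing a witness for that merged argument from the original data. Balancing the contributions of the merged and non-merged arguments across the various choices of $I$, so as to force at least one $f_I$ with essential arity $\geq n - 2$, is the delicate part; I would accordingly split the proof according to whether $n > \card{A} + 1$ or $n = \card{A} + 1$, handling the second case by a dedicated ad hoc argument that uses the essentiality of every position of $f$ simultaneously.
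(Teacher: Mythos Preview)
The paper does not supply its own proof of this lemma; it is quoted verbatim from Willard's paper~\cite[Lemma~1.2]{Willard} and used as a black box in the proof of Lemma~\ref{lem:nonaffine}. There is therefore nothing in the present paper to compare your attempt against.

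As for the proposal itself, it is a plan rather than a proof. The two places where you write ``should then exhibit'' and ``a dedicated ad hoc argument'' are exactly where the content of Willard's lemma lives. In particular, the double-counting step is not automatic: knowing that for each $i$ there is \emph{some} $I_i \in \couples$ with $i \notin I_i$ and $a^i_{\min I_i} = a^i_{\max I_i}$ does not by itself force a single $I$ to work for $n-2$ of the indices, since the witnesses $\mathbf{a}^i$ are chosen independently and the pairs $I_i$ can be spread thinly over $\couples$. And in the boundary case $n = \card{A} + 1$ you have correctly identified the obstruction but not resolved it; the actual argument in Willard's paper proceeds differently (via a careful choice of a single tuple and an analysis of how essentiality propagates when one coordinate is frozen), not by aggregating independent per-coordinate witnesses. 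If you want to reconstruct the proof, you will need either to consult~\cite{Willard} or to replace the per-index witness scheme with a global one.
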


\subsection{Reconstruction problem for functions of several arguments}

Assume that $n \geq 2$ and let $f \colon A^n \to B$.
The \emph{deck} of $f$, denoted $\deck f$, is the multiset $\ms{f_I / {\equiv} : I \in \couples}$ of the equivalence classes of the identification minors of $f$.
Any element of the deck of $f$ is called a \emph{card} of $f$.
A function $g \colon A^n \to B$ is a \emph{reconstruction} of $f$, if $\deck f = \deck g$.
A function is \emph{reconstructible} if it is equivalent to all of its reconstructions.
A class $\cl{C} \subseteq \cl{F}_{AB}$ of functions is \emph{reconstructible} if all members of $\cl{C}$ are reconstructible.
A class $\cl{C} \subseteq \cl{F}_{AB}$ is \emph{weakly reconstructible} if for every $f \in \cl{C}$, all reconstructions of $f$ that are members of $\cl{C}$ are equivalent to $f$.
A class $\cl{C} \subseteq \cl{F}_{AB}$ is \emph{recognizable} if all reconstructions of the members of $\cl{C}$ are members of $\cl{C}$.
Note that if a class of functions is recognizable and weakly reconstructible, then it is reconstructible.

This reconstruction problem was formulated and some results, both positive and negative, on the reconstructibility of functions were presented in~\cite{LehtonenDeckSymm}. The reader is referred to this paper for more details, motivations and background information.

\subsection{On the reconstructibility of affine functions}

By a \emph{nonassociative right semiring} we mean an algebra $(G; +, \cdot)$ with binary operations $+$ and $\cdot$ called \emph{addition} and \emph{multiplication}, respectively, such that
\begin{itemize}
\item $(G; +)$ is a commutative monoid with neutral element $0$ ($0 + a = a + 0 = a$),
\item $(G; \cdot)$ is a groupoid with right identity $1$ ($a \cdot 1 = a$),
\item multiplication right distributes over addition ($(a + b) \cdot c = a \cdot c + b \cdot c$),
\item multiplication on the right by $0$ annihilates $G$ ($a \cdot 0 = 0$).
\end{itemize}
A nonassociative right semiring $(G; +, \cdot)$ is \emph{cancellative} if the additive monoid $(G; +)$ is cancellative, i.e., $a + b = a + c$ implies $b = c$.
We will denote multiplication simply be concatenation.

The attribute ``nonassociative'' refers to the fact that we do not require that multiplication be associative, contrary to the usual practice with semirings. The attribute ``right'' refers to the fact that we only stipulate right multiplicative identity, right distributivity, and right annihilation. A \emph{nonassociative left semiring} could be defined analogously, but we will not need this notion here.
Examples of nonassociative right semirings include semirings, rings, fields, and bounded distributive lattices. Rings and fields are cancellative.

A function $f \colon G^n \to G$ is \emph{affine} over $(G; +, \cdot)$ if
\begin{equation}
\label{eq:affine}
f(x_1, \dots, x_n) = a_1 x_1 + \dots + a_n x_n + c,
\end{equation}
for some $a_1, \dots, a_n, c \in G$. If $c = 0$, then $f$ is \emph{linear.}

\begin{lemma}
\label{lem:fgabcd}
Let $(G; +, \cdot)$ be a nonassociative right semiring.
Let $f$ be an affine function over $(G; +, \cdot)$.
If $f$ is linear or if $(G; +, \cdot)$ is cancellative, then $f$ has a unique representation of the form~\eqref{eq:affine}.
\end{lemma}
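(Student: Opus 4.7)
The plan is to suppose that $f$ has two representations of the form~\eqref{eq:affine}, say
\[
f(x_1, \dots, x_n) = a_1 x_1 + \dots + a_n x_n + c = b_1 x_1 + \dots + b_n x_n + d,
\]
and to extract the coefficients by evaluating $f$ at a few well-chosen tuples, exploiting the semiring axioms (right distributivity, right identity, right annihilation, and additive commutativity).

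First I would plug in $(x_1, \dots, x_n) = (0, \dots, 0)$. Right annihilation gives $a_i \cdot 0 = b_i \cdot 0 = 0$, and since $0$ is the additive neutral element, both sides collapse to $c$ and $d$ respectively; hence $c = d$. Next, for each fixed $k \in \nset{n}$, I would plug in the tuple with $1$ in coordinate $k$ and $0$ in all other coordinates. Right identity gives $a_k \cdot 1 = a_k$ and right annihilation gives $a_j \cdot 0 = 0$ for $j \neq k$, so both sides reduce to $a_k + c$ and $b_k + d$ respectively (using additive commutativity to isolate the nontrivial summand). Thus $a_k + c = b_k + d$ for each $k$.

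To finish, I would split into the two cases offered by the hypothesis. If $f$ is linear, then some representation has $c = 0$, and the relation $c = d$ just derived forces both constants to vanish, so $a_k = b_k$ follows immediately. If instead $(G; +, \cdot)$ is cancellative, then from $a_k + c = b_k + c$ (now using $c = d$), additive cancellativity yields $a_k = b_k$.

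I do not expect any real obstacle: the argument is a routine unpacking of the semiring axioms. The only things to take care of are (i) that right distributivity and right annihilation/identity are precisely the forms of the axioms that apply, since multiplication need not be commutative, and (ii) that one cannot drop either hypothesis, because in a noncancellative, truly affine setting the constant term and the additive contributions of the coefficients could trade off against each other.
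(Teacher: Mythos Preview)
Your proposal is correct and follows essentially the same approach as the paper: evaluate at the zero tuple to get $c=d$, evaluate at the standard unit tuples $\mathbf{e}_k$ to get $a_k+c=b_k+d$, and then invoke either $c=0$ (linear case) or additive cancellation to conclude $a_k=b_k$. Your write-up is slightly more explicit about which semiring axioms are being used at each step, but the argument is the same.
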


\begin{proof}
Let $f \colon G^n \to G$, $f(x_1, \dots, x_n) = a_1 x_1 + \dots + a_n x_n + c$. Assume that $f(x_1, \dots, x_n) = a'_1 x_1 + \dots + a'_n x_n + c'$ for some $a'_1, \dots, a'_n, c' \in G$. Then $c = f(0, \dots, 0) = c'$, and for every $i \in \nset{n}$,
$a_i + c = f(\mathbf{e}_i) = a'_i + c' = a'_i + c$, where $\mathbf{e}_i$ denotes the $n$-tuple in which the $i$-th entry is $1$ and the remaining entries are $0$.
If $f$ is linear (i.e., $c = 0$) or if $(G; +, \cdot)$ is cancellative, then $a_i = a'_i$ for all $i \in \nset{n}$.
\end{proof}

Let $f(x_1, \dots, x_n) = a_1 x_1 + \dots + a_n x_n + c$. Denote by $C_f$ the multiset $\ms{a_1, \dots, a_n}$ of the coefficients of the non-constant terms of $f$.

\begin{lemma}
\label{lem:fgequivCfCg}
Let $(G; +, \cdot)$ be a nonassociative right semiring.
Let $f, g \colon G^n \to G$ be affine functions over $(G; +, \cdot)$.
Assume that $f$ and $g$ are linear or $(G; +, \cdot)$ is cancellative.
Then $f \equiv g$ if and only if $C_f = C_g$ and the constant terms of $f$ and $g$ are equal.
\end{lemma}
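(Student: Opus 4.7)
The plan is to prove each direction by manipulating the affine expressions and leveraging the uniqueness result of Lemma~\ref{lem:fgabcd}. Write $f(x_1, \dots, x_n) = a_1 x_1 + \dots + a_n x_n + c$ and $g(x_1, \dots, x_n) = b_1 x_1 + \dots + b_n x_n + d$; the lemma's hypothesis is exactly what makes these representations, and hence the multisets $C_f$ and $C_g$, well-defined.

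For the \emph{if} direction, I would begin from $C_f = C_g$ and $c = d$. The multiset equality $\ms{a_1, \dots, a_n} = \ms{b_1, \dots, b_n}$ provides a permutation $\sigma$ of $\nset{n}$ with $b_j = a_{\sigma(j)}$ for every $j$. Substituting $x_{\sigma(i)}$ for the $i$-th argument of $g$ and reindexing the resulting sum via the bijection $\sigma$, together with the commutativity of $+$, yields
\[
g(x_{\sigma(1)}, \dots, x_{\sigma(n)}) = \sum_{j=1}^n a_{\sigma(j)} x_{\sigma(j)} + c = \sum_{i=1}^n a_i x_i + c = f(x_1, \dots, x_n),
\]
which is precisely the witnessing identity for $f \equiv g$.

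For the \emph{only if} direction, assume $f \equiv g$ via some bijection $\sigma$. Substituting into the affine expression for $g$ and reindexing by $\sigma$ yields
\[
f(x_1, \dots, x_n) = g(x_{\sigma(1)}, \dots, x_{\sigma(n)}) = \sum_{i=1}^n b_{\sigma^{-1}(i)} x_i + d,
\]
which is a second affine representation of $f$. Lemma~\ref{lem:fgabcd} then forces $c = d$ and $a_i = b_{\sigma^{-1}(i)}$ for all $i \in \nset{n}$, so $C_f = \ms{a_1, \dots, a_n}$ and $C_g = \ms{b_1, \dots, b_n}$ coincide as multisets (one is a permutation of the other).

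The only delicate point is recognizing that the coefficient identification in the necessity direction must come from Lemma~\ref{lem:fgabcd} rather than from any direct evaluation at test tuples; the linearity-or-cancellativity hypothesis is exactly what rules out the pathology of distinct coefficient sequences defining the same function. Beyond this, no substantive obstacles are anticipated, as both directions reduce to a single reindexing argument applied to the affine normal form.
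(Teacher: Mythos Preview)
Your proposal is correct and follows essentially the same route as the paper: both directions are handled by reindexing the affine normal form via the permutation supplied by equivalence (or by the multiset equality), and the uniqueness step in the necessity direction is delegated to Lemma~\ref{lem:fgabcd} just as the paper does. The only differences are cosmetic (the direction in which you name the permutation $\sigma$), and your explicit remark that the hypothesis is what makes $C_f$ and $C_g$ well-defined is a welcome clarification.
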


\begin{proof}
Let $f(x_1, \dots, x_n) = \sum_{i = 1}^n a_i x_i + c$ and $g(x_1, \dots, x_n) = \sum_{i = 1}^n b_i x_i + d$.
Assume first that $f \equiv g$. Then there exists a permutation $\sigma \colon \nset{n} \to \nset{n}$ such that $f(a_1, \dots, a_n) = g(a_{\sigma(1)}, \dots, a_{\sigma(n)})$ for all $(a_1, \dots, a_n) \in G^n$. Thus, $f(x_1, \dots, x_n) = \sum_{i = 1}^n b_i x_{\sigma(i)} + d$. By Lemma~\ref{lem:fgabcd}, $c = d$ and $a_i = b_{\sigma^{-1}(i)}$ for all $i \in \nset{n}$. Thus, $C_f = \ms{a_1, \dots, a_n} = \ms{b_{\sigma^{-1}(1)}, \dots, b_{\sigma^{-1}(n)}} = C_g$.

For the converse implication, assume that $c = d$ and $C_f = C_g$. Then there exists a permutation $\sigma \colon \nset{n} \to \nset{n}$ such that $a_i = b_{\sigma(i)}$ for all $i \in \nset{n}$. We have
\begin{multline*}
f(x_1, \dots, x_n)
= a_1 x_1 + \dots + a_n x_n + c
= b_{\sigma(1)} x_1 + \dots + b_{\sigma(n)} x_n + d \\
= b_1 x_{\sigma^{-1}(1)} + \dots + b_n x_{\sigma^{-1}(n)} + d
= g(x_{\sigma^{-1}(1)}, \dots, x_{\sigma^{-1}(n)}).
\end{multline*}
Thus $f \equiv g$.
\end{proof}

For a multiset $M$ over $G$ and $c \in G$, with $\card{M} = n$, denote by $F_{M,c}$ the set $\{f \colon G^n \to G : C_f = M,\, f(0, \dots, 0) = c\}$.
It is clear from the definition and from Lemma~\ref{lem:fgequivCfCg} that $F_{M,c} = F_{M',c'}$ if and only if $M = M'$ and $c = c'$.

\begin{lemma}
\label{lem:functionsandmultisets}
Let $f \colon G^n \to G$ be an affine function over a nonassociative right semiring $(G; +, \cdot)$. Then
$\deck C_f = \ms{C_{f_I} : I \in \couples}$ and $\deck f = \ms{F_{M_I,c} : I \in \couples}$, where $c = f(0, \dots, 0)$ and $(M_I)_{I \in \couples}$ is an indexed family satisfying $\deck C_f = \ms{M_I : I \in \couples}$.
\end{lemma}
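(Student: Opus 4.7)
The plan is to compute each identification minor $f_I$ directly from the affine representation of $f$, read off its coefficient multiset and constant term, and then pair the result with Lemma~\ref{lem:fgequivCfCg} to identify equivalence classes with sets of the form $F_{M,c}$.

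Write $f(x_1, \dots, x_n) = a_1 x_1 + \dots + a_n x_n + c$ with $c = f(0, \dots, 0)$, so that $C_f = \ms{a_1, \dots, a_n}$. Fix $I = \{i,j\} \in \couples$ with $i < j$. Unwinding the definition of the identification minor yields
\[
f_I(x_1, \dots, x_{n-1}) = a_1 x_1 + \dots + a_{i-1} x_{i-1} + a_i x_i + a_{i+1} x_{i+1} + \dots + a_{j-1} x_{j-1} + a_j x_i + a_{j+1} x_j + \dots + a_n x_{n-1} + c,
\]
since the argument that was at position $j$ has been replaced by the argument at position $i$. Commutativity of addition brings the two summands involving $x_i$ next to each other, and the right distributive law collapses $a_i x_i + a_j x_i$ to $(a_i + a_j) x_i$. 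Therefore $f_I$ is affine, its constant term is $c$, and its coefficient multiset is
\[
C_{f_I} = C_f \setminus \ms{a_i, a_j} \uplus \ms{a_i + a_j}.
\]
This is exactly the $I$-th card of the multiset $C_f$ as defined in Section~3, so ranging over all $I \in \couples$ gives the first identity $\deck C_f = \ms{C_{f_I} : I \in \couples}$.

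For the second identity, relabel so that $M_I = C_{f_I}$. Since $f_I$ is affine with coefficient multiset $M_I$ and constant term $c$, we have $f_I \in F_{M_I, c}$. The ``if'' direction of Lemma~\ref{lem:fgequivCfCg} gives $g \equiv f_I$ for every $g \in F_{M_I, c}$; conversely, any $g$ equivalent to $f_I$ is obtained by permuting the arguments of an affine function, hence is affine with the same coefficient multiset and constant term, i.e.\ $g \in F_{M_I, c}$. Therefore $f_I / {\equiv} = F_{M_I, c}$, and consequently $\deck f = \ms{F_{M_I, c} : I \in \couples}$.

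The only mildly nontrivial step is the collapsing $a_i x_i + a_j x_i = (a_i + a_j) x_i$, which is precisely the role of right distributivity in the semiring axioms; this is the bridge that turns the functional identification-minor operation into the groupoid-theoretic card operation on $C_f$. Everything else is bookkeeping on indices and a direct appeal to Lemma~\ref{lem:fgequivCfCg}.
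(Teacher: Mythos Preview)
Your proof is correct and follows essentially the same approach as the paper: compute $f_I$ explicitly from the affine representation, read off its coefficient multiset as the $I$-th card of $C_f$, and identify the equivalence class $f_I/{\equiv}$ with $F_{M_I,c}$. One small remark: your appeal to Lemma~\ref{lem:fgequivCfCg} only uses its ``if'' direction, which (as is visible in that lemma's proof) does not actually require the linear-or-cancellative hypothesis, so the argument is valid in the full generality of the present lemma.
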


\begin{proof}
Let $f(x_1, \dots, x_n) = a_1 x_1 + \dots + a_n x_n + c$. Then $C_f = \ms{a_1, \dots, a_n}$ and $c = f(0, \dots 0)$.
For each $I \in \couples$, let $(C_f)_I := C_f \setminus \ms{a_{\min I}, a_{\max I}} \uplus \ms{a_{\min I} + a_{\max I}}$. Then $\deck C_f = \ms{(C_f)_I : I \in \couples}$. For each $I \in \couples$,
\begin{multline*}
f_I(x_1, \dots, x_{n-1}) = \\
(a_{\min I} + a_{\max I}) x_{\min I}
+ \sum_{i = 1}^{\min I - 1} a_i x_i
+ \sum_{i = \min I + 1}^{\max I - 1} a_i x_i
+ \sum_{i = \max I + 1}^n a_i  x_{i-1}.
\end{multline*}
Thus $C_{f_I} = (C_f)_I$.
We conclude that $\deck C_f = \ms{(C_f)_I : I \in \couples} = \ms{C_{f_I} : I \in \couples}$
and
$\deck f = \ms{f_I / {\equiv} : I \in \couples} = \ms{F_{C_{f_I},c} : I \in \couples} = \ms{F_{(C_f)_I,c} : I \in \couples}$.
\end{proof}

\begin{theorem}
\label{thm:affineweaklyrec}
Let $f, g \colon G^n \to G$ be affine functions over a nonassociative right semiring $(G; +, \cdot)$ with $n \geq 4$.
If $f$ and $g$ are linear or if $(G; +, \cdot)$ is cancellative, then $\deck f = \deck g$ if and only if $f \equiv g$.
\end{theorem}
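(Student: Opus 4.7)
The plan is to reduce the statement to the multiset reconstruction results (Theorems~\ref{thm:ngeq5} and~\ref{thm:neq4}) via Lemma~\ref{lem:functionsandmultisets}. The forward implication $f \equiv g \Rightarrow \deck f = \deck g$ is immediate from the definition of equivalence, so I would focus on the converse.

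Assume $\deck f = \deck g$. By Lemma~\ref{lem:functionsandmultisets}, $\deck f = \ms{F_{M_I, c} : I \in \couples}$ with $c = f(0,\dots,0)$ and $\ms{M_I : I \in \couples} = \deck C_f$, and analogously $\deck g = \ms{F_{M'_I, c'} : I \in \couples}$ with $c' = g(0,\dots,0)$ and $\ms{M'_I : I \in \couples} = \deck C_g$. Since $f, g$ are linear or $(G;+,\cdot)$ is cancellative, the parametrization $(M,c) \mapsto F_{M,c}$ is injective, as noted right before Lemma~\ref{lem:functionsandmultisets}. Matching cards therefore yields $c = c'$ and $\deck C_f = \deck C_g$ as multisets of cardinality $n$ over the commutative groupoid $(G; +)$.

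Next I would invoke the multiset reconstruction theorems. For $n \geq 5$, Theorem~\ref{thm:ngeq5} gives $C_f = C_g$ at once. For $n = 4$, Theorem~\ref{thm:neq4} leaves the exceptional configuration (ii) of Example~\ref{ex:4rstab} as the only alternative to $C_f = C_g$; this is the only real obstacle in the argument. To rule it out, I would exploit the fact that $(G; +)$, being the additive reduct of a nonassociative right semiring, is a commutative \emph{monoid} and hence associative. The remark inside Example~\ref{ex:4rstab} shows that configuration (ii) forces $(G;+)$ to be nonassociative (either $r + (s+t) \neq (r+s)+t$ when $r,s,t$ are pairwise distinct, or $r+(r+u) \neq (r+r)+u$ when $r=s=t$), so (ii) is incompatible with our setting. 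Consequently $C_f = C_g$ also when $n = 4$.

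Finally, Lemma~\ref{lem:fgequivCfCg} converts the equalities $C_f = C_g$ and $f(0,\dots,0) = g(0,\dots,0)$ into $f \equiv g$, completing the proof. Apart from excluding the exceptional case of Theorem~\ref{thm:neq4}, every step is a direct application of the machinery developed earlier in the paper.
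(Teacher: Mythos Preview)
Your proposal is correct and follows essentially the same route as the paper's proof: reduce to the multiset problem via Lemma~\ref{lem:functionsandmultisets}, invoke Theorems~\ref{thm:ngeq5} and~\ref{thm:neq4}, rule out the exceptional $n=4$ configuration using the associativity observation from Example~\ref{ex:4rstab}, and conclude with Lemma~\ref{lem:fgequivCfCg}. The only cosmetic difference is that the paper obtains $c=d$ directly by evaluating any matched pair of cards at the origin, whereas you extract it from the injectivity of $(M,c)\mapsto F_{M,c}$; both are fine.
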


\begin{proof}
Let
\begin{align*}
f(x_1, \dots, x_n) &= a_1 x_1 + \dots + a_n x_n + c, \\
g(x_1, \dots, x_n) &= b_1 x_1 + \dots + b_n x_n + d,
\end{align*}
for some $a_1, \dots, a_n, b_1, \dots, b_n, c, d \in G$. We assume that $c = d = 0$ or $(G; +, \cdot)$ is cancellative.

It is clear that if $f \equiv g$ then $\deck f = \deck g$.
Assume that $\deck f = \deck g$. Since $f_I(0, \dots, 0) = f(0, \dots, 0) = c$ and $g_I(0, \dots, 0) = g(0, \dots 0) = d$ for all $I \in \couples$, we must have that $c = d$.

By Lemma~\ref{lem:functionsandmultisets}, $\deck f = \ms{F_{M_I,c} : I \in \couples}$ and $\deck g = \ms{F_{M'_I,c} : I \in \couples}$, where $(M_I)_{I \in \couples}$ and $(M'_I)_{I \in \couples}$ are indexed families satisfying $\ms{M_I : I \in \couples} = \deck C_f$ and $\ms{M'_I : I \in \couples} = \deck C_g$.
Since $F_{M,c} = F_{M',c}$ if and only if $M = M'$, we have that $\deck C_f = \deck C_g$.
If $n \geq 5$, then Theorem~\ref{thm:ngeq5} implies that $C_f = C_g$.
Since $(G; +)$ is associative, Theorem~\ref{thm:neq4} implies, in light of Example~\ref{ex:4rstab}, that $C_f = C_g$ in the case that $n = 4$.
Applying Lemma~\ref{lem:fgequivCfCg}, we conclude that $f \equiv g$.
\end{proof}

Theorem~\ref{thm:affineweaklyrec} asserts that the class of linear functions of arity at least $4$ over any nonassociative right semiring $(G; +, \cdot)$ is weakly reconstructible. Furthermore, if $(G; +, \cdot)$ is cancellative, then the class of affine functions of arity at least $4$ over $(G; +, \cdot)$ is weakly reconstructible.

Let us consider the special case when $(G, +, \cdot)$ is a finite field of order $q = p^k$ ($p$ prime).
It is well known that every operation on a finite field is a polynomial function. Moreover, each function $f \colon G^n \to G$ is induced by a unique polynomial in $n$ variables where every exponent of every occurrence of every variable is at most $q - 1$. Such a polynomial is referred to as the \emph{canonical polynomial} of $f$.
It is easy to verify that a polynomial function $f$ depends on the $i$-th argument if and only if the variable $x_i$ occurs in the canonical polynomial of $f$.

\begin{lemma}
\label{lem:nonaffine}
Assume that $(G; +, \cdot)$ is a finite field of order $q = p^k$. If $n > \max(q, 3)$ and $f \colon G^n \to G$ is not affine, then there exists $I \in \couples$ such that $f_I$ is not affine.
\end{lemma}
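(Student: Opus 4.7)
My plan is to prove the contrapositive: assuming every $f_I$ is affine, show that $f$ is affine. If $f$ does not depend on some argument $x_k$, take any $I \ni k$; then $f_I$ is equivalent to $f$ regarded as an $(n-1)$-ary function, hence still non-affine. So we may assume $f$ depends on all $n$ arguments. Represent $f$ by its canonical polynomial $P(x_1,\dots,x_n) = \sum_\alpha c_\alpha \prod_k x_k^{\alpha_k}$ with each $\alpha_k \in \{0,\dots,q-1\}$ (using $x^q = x$); $f$ is affine iff $c_\alpha = 0$ for every non-affine multi-index $\alpha$ (one with $|\alpha| \geq 2$). Pick a non-affine $\alpha^*$ with $c_{\alpha^*} \neq 0$ of minimum support size $s$.

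\emph{Easy case} ($s \leq n-2$): Choose $I = \{i,j\}$ disjoint from $\operatorname{supp}(\alpha^*)$, possible since $n - s \geq 2$. Identifying $x_j$ with $x_i$ fixes $\alpha^*$; any $\beta$ with $c_\beta \neq 0$ mapping to the same reduced multi-index in $f_I$ must have $\beta_\ell = \alpha^*_\ell$ for $\ell \ne i,j$ and $\operatorname{red}(\beta_i + \beta_j) = 0$, forcing $\beta_i = \beta_j = 0$ and hence $\beta = \alpha^*$. So the reduced multi-index remains non-affine with coefficient $c_{\alpha^*} \neq 0$, contradicting $f_I$ affine.

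\emph{Hard case} ($s \in \{n-1, n\}$): Every non-affine monomial has support of size at least $n-1$. Decompose $P = P_{\mathrm{aff}} + \sum_{k=1}^n A_k + B$, where $A_k$ collects monomials of support $[n]\setminus\{k\}$ and $B$ collects the full-support ones; each of $A_k$, $B$ then has every variable appearing with exponent in $\{1,\dots,q-1\}$. For each $I = \{i,j\}$ and each $k \in [n] \setminus I$, the $A_k$-contributions to the canonical polynomial of $f_I$ land in multi-indices of support $[n]\setminus\{k,j\}$ (size $n-2 \geq 2$, hence non-affine), and these do not mix with the contributions of $A_i, A_j, B$ (all of support $[n]\setminus\{j\}$). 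So $f_I$ affine forces the canonical form of $A_k|_{x_j = x_i}$ to vanish: viewed as a function on $\mathbb{F}_q^{n-1}$, $A_k$ vanishes whenever two of its coordinates indexed in $[n]\setminus\{k\}$ are equal. Since $A_k$ also vanishes at any point with some coordinate equal to $0$, its effective domain is $(\mathbb{F}_q^*)^{n-1}$. The hypothesis $n > q$ gives $n-1 > |\mathbb{F}_q^*|$, so by pigeonhole every point of $(\mathbb{F}_q^*)^{n-1}$ has two equal coordinates, forcing $A_k \equiv 0$. With all $A_k = 0$, an analogous pigeonhole on $(\mathbb{F}_q^*)^n$ (using $n > q-1$) forces $B \equiv 0$. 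Hence $P = P_{\mathrm{aff}}$ and $f$ is affine, a contradiction.

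The main obstacle I anticipated in the hard case is that a direct degree-based divisibility argument on the homogeneous top-degree part $P_d$ (showing $(x_i-x_j) \mid P_d$ and using $\deg P_d < \binom{n}{2}$) only handles algebraic degree $d \leq q-1$: for higher $d$ the substitution $x_j \to x_i$ mixes the degree levels through the reduction $x^q \to x$. The vanishing-and-pigeonhole argument above sidesteps this
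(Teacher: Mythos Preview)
Your proof is correct. The easy case ($s \le n-2$) coincides with the paper's first case: pick two indices outside the support of a non-affine monomial and observe that nothing else can collide with it after identification. The hard case, however, is handled by a genuinely different mechanism.

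The paper's argument in the hard case is direct and leans on Willard's lemma (Lemma~\ref{lem:Willard1.2}): writing $f = f_{\mathrm{aff}} + f_{\mathrm{non}}$, one notes that $f_{\mathrm{non}}$ depends on all $n > q = \lvert G\rvert$ arguments, so Willard guarantees an $I$ for which $(f_{\mathrm{non}})_I$ depends on at least $n-2$ arguments; its canonical polynomial therefore contains a monomial with at least $n-2 \ge 2$ variables, and $f_I$ is not affine. Your argument instead runs the contrapositive and is fully self-contained: the support-size bookkeeping cleanly isolates, in the canonical polynomial of each $f_I$, the contribution of each block $A_k$ (and later $B$), and the vanishing-plus-pigeonhole step on $(\mathbb{F}_q^{\,*})^{n-1}$ and $(\mathbb{F}_q^{\,*})^{n}$ kills each block outright. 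What your approach buys is the elimination of the external citation; what the paper's approach buys is a one-shot identification $I$ rather than needing affineness of \emph{all} $f_I$ simultaneously, and a shorter write-up once Willard is available. Two minor points worth tightening in your write-up: the no-mixing claim should also mention that distinct $A_k, A_{k'}$ with $k,k'\notin I$ land on different supports (so they do not interfere with one another), and that the $A_k$-block cannot collapse into the affine part since its support size $n-2$ is at least $2$; both follow from $n>3$, which you already use.
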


\begin{proof}
The canonical polynomial of $f$ can be written as $P = \sum_{\mathbf{r} \in \{0, \dots, q - 1\}^n} a_\mathbf{r} \mathbf{x}^\mathbf{r}$, where $\mathbf{r} = (r_1, \dots, r_n)$ and $\mathbf{x}^\mathbf{r} = x_1^{r_1} x_2^{r_2} \cdots x_n^{r_n}$.
Let $P_\mathrm{aff}$ be the polynomial comprising the monomials of $P$ of total degree at most $1$, and let $P_\mathrm{non}$ be the polynomial comprising the monomials of $P_g$ of total degree at least $2$, i.e.,
\[
P_\mathrm{aff} = a_{\mathbf{0}} + \sum_{i = 1}^n a_{\mathbf{e}_i} x_i,
\qquad
P_\mathrm{non} = \sum_{\mathbf{r} \in \{0, \dots, q - 1\}^n \setminus \{\mathbf{0}, \mathbf{e}_1, \dots, \mathbf{e}_n\}} a_\mathbf{r} \mathbf{x}^\mathbf{r},
\]
where $\mathbf{0} = (0, \dots, 0)$ and $\mathbf{e}_i$ is the $n$-tuple in which the $i$-th entry is $1$ and the remaining entries are $0$. Let $f_\mathrm{aff}, f_\mathrm{non} \colon G^n \to G$ be the functions induced by the polynomials $P_\mathrm{agg}$ and $P_\mathrm{non}$, respectively. Then clearly $P = P_\mathrm{aff} + P_\mathrm{non}$ and $f = f_\mathrm{aff} + f_\mathrm{non}$ (pointwise addition of functions). Furthermore, for all $I \in \couples$, we have $f_I = (f_\mathrm{aff})_I + (f_\mathrm{non})_I$. Since $f$ is not affine, it holds that $P_\mathrm{non} \neq 0$.

Assume first that $P_\mathrm{non}$ has a monomial $M = a_\mathbf{r} \mathbf{x}^\mathbf{r}$ in which there occur at most $n - 2$ variables, i.e., $a_\mathbf{r} \neq 0$ and there exist $i, j \in \nset{n}$ such that $i \neq j$ and $r_i = r_j = 0$. Let $I = \{i, j\}$. The canonical polynomial of $f_I$ contains the monomial $M$ (with some reindexing of variables, if necessary); hence $f_I$ is not affine.

Assume then that all monomials in $P_\mathrm{non}$ have at least $n - 1$ variables. If there is a variable $x_i$ with $i \in \nset{n}$ that does not occur in any of the monomials of $P_\mathrm{non}$, then let $I = \{i, j\}$ for any $j \in \nset{n} \setminus \{i\}$. The canonical polynomial of $f_I$ contains all monomials of $P_\mathrm{non}$ (with some reindexing of variables, if necessary); hence $f_I$ is not affine.

We are left with the case that all monomials in $P_\mathrm{non}$ have at least $n - 1$ variables and all variables $x_i$, $i \in \nset{n}$, occur in $P_\mathrm{non}$.
Identification of a pair of variables in $P_\mathrm{non}$ results in a polynomial in which all monomials have at least $n - 2$ variables; some monomials may cancel each other, so the resulting polynomial may be $0$.
Since $f_\mathrm{non}$ depends on all of its $n$ arguments and $n > \max(q, 3)$, it follows from Lemma~\ref{lem:Willard1.2} that there exists $I \in \couples$ such that $(f_\mathrm{non})_I$ depends on at least $n - 2$ arguments; hence the canonical polynomial of $(f_\mathrm{non})_I$ cannot be $0$, so it contains a monomial with at least $n - 2$ variables.
Consequently, the canonical polynomial of $f_I$ has a monomial with at least $n - 2$ variables; hence $f_I$ is not affine.
\end{proof}

\begin{theorem}
\label{thm:affineoverfield}
Let $(G; +, \cdot)$ be a finite field of order $q = p^k$. The affine functions of arity at least $\max(q, 3) + 1$ over $(G; +, \cdot)$ are reconstructible.
\end{theorem}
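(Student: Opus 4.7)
The plan is to deduce reconstructibility from the two ingredients already in place. Theorem~\ref{thm:affineweaklyrec} gives weak reconstructibility of the class of affine functions in the required arity range (since finite fields are cancellative and $\max(q,3)+1 \geq 4$), while Lemma~\ref{lem:nonaffine} is essentially a recognizability statement for this class at arities exceeding $\max(q,3)$. Since any class that is both recognizable and weakly reconstructible is reconstructible, the task reduces to establishing recognizability: every reconstruction of an affine function of arity at least $\max(q,3)+1$ is itself affine.

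Concretely, I would fix an affine $f \colon G^n \to G$ with $n \geq \max(q,3)+1$ and a function $g \colon G^n \to G$ satisfying $\deck g = \deck f$. Two closure properties of the affine class that are immediate from the polynomial form~\eqref{eq:affine} should be recorded first: equivalence preserves affineness (permuting the arguments of $\sum_i a_i x_i + c$ just permutes the coefficients $a_i$), and so does passage to identification minors (setting $x_{\max I} := x_{\min I}$ merges two linear terms into one). Consequently every card in $\deck f$ is an equivalence class of affine $(n-1)$-ary functions, and via the assumption $\deck g = \deck f$, every identification minor $g_I$ is equivalent to some $f_J$ and hence is itself affine.

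Now a contrapositive application of Lemma~\ref{lem:nonaffine} finishes the recognizability step: were $g$ non-affine, the bound $n > \max(q,3)$ would produce some $I \in \couples$ with $g_I$ not affine, contradicting the previous paragraph. Hence $g$ is affine. At this point $f$ and $g$ are affine functions sharing a common deck, of arity at least $4$, over a cancellative right semiring, so Theorem~\ref{thm:affineweaklyrec} applies directly to give $f \equiv g$, which is the required reconstructibility.

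The substantive work has already been carried out in Lemma~\ref{lem:nonaffine}, and that lemma is what dictates the arity threshold $\max(q,3)+1$ appearing in the theorem. No additional obstacle is anticipated: the two closure properties of the affine class under equivalence and identification minors are routine, and the rest of the argument is purely a matter of chaining together recognizability and weak reconstructibility.
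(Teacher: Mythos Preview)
Your proposal is correct and follows essentially the same approach as the paper's proof: establish recognizability via Lemma~\ref{lem:nonaffine} (together with the closure of affine functions under identification minors and equivalence), combine with weak reconstructibility from Theorem~\ref{thm:affineweaklyrec}, and conclude reconstructibility. The paper's proof is more terse, but the logical structure is identical.
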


\begin{proof}
Let $\cl{C}$ be the class of affine functions of arity at least $\max(q, 3) + 1$ over $(G; +, \cdot)$.
Since the identification minors of affine functions are affine, Lemma~\ref{lem:nonaffine} implies that $\cl{C}$ is recognizable. By Theorem~\ref{thm:affineweaklyrec}, $\cl{C}$ is weakly reconstructible. Consequently, $\cl{C}$ is reconstructible.
\end{proof}

\begin{remark}
The lower bound $\max(q, 3) + 1$ in Theorem~\ref{thm:affineoverfield} cannot be improved. As explained in~\cite{LehtonenDeckSymm}, no function $f \colon A^n \to B$ with $n \leq \card{A}$ is reconstructible.
It is also necessary to assume that the arity is greater than $3$. Since $(G; +)$ is a group, additive inverses exist for all elements, and for all $a, b \in G$, the multisets $\ms{a, b, -(a + b)}$ and $\ms{-a, -b, a + b}$ have the same deck (see Example~\ref{ex:3abaceqa}); thus the affine functions induced by the polynomials $a x_1 + b x_2 - (a + b) x_3$ and $-a x_1 - b x_2 + (a + b) x_3$ have the same deck.

Furthermore, if $(G; +, \cdot)$ is the two-element field, then $(G; +)$ is a Boolean group, and the multisets $\ms{1, 1, 1}$ and $\ms{1, 0, 0}$ have the same deck (see Example~\ref{ex:3abaceqbc}). Thus the ternary functions induced by the polynomials $x_1 + x_2 + x_3$ and $x_1$ have the same deck, because all identification minors of these functions are projections, and any two projections are equivalent. The class of affine functions of arity $3$ on the $2$-element field is not even recognizable. Namely, all identification minors of the function induced by the polynomial $x_1 x_2 + x_1 x_3 + x_2 x_3$ are projections, too.
\end{remark}

\begin{remark}
As explained in~\cite{LehtonenDeckSymm}, if $A$ is infinite, then no function $f \colon A^n \to B$ is reconstructible.
Even the class of polynomial functions over an infinite field $F$ fails to be weakly reconstructible. For $n \geq 2$, define the polynomial function $\Delta_n \colon F^n \to F$,
\[
\Delta_n(x_1, \dots, x_n) = \prod_{1 \leq i < j \leq n} (x_i - x_j).
\]
We have that $(\Delta_n)_I(x_1, \dots, x_{n-1}) = 0$ for every $I \in \couples$. Consequently, for any function $f \colon F^n \to F$ (polynomial or not), it holds that $f_I = (f + \Delta_n)_I$ for every $I \in \couples$ and $\deck f = \deck (f + \Delta_n)$.
\end{remark}

%%%%%%%%%%%%%%%%%%%%%%%%%%%%%%%%%%%%%%%%%%%%%%%%%%

\section*{Acknowledgments}

The author would like to thank Miguel Couceiro for inspiring discussions on minors of functions and reconstruction problems.

%%%%%%%%%%%%%%%%%%%%%%%%%%%%%%%%%%%%%%%%%%%%%%%%%%

\end{document}